\numberwithin{equation}{section}
\newtheorem{theorem}{Theorem}[section]
\newtheorem{lemma}[theorem]{Lemma}
\newtheorem{corollary}[theorem]{Corollary}
\theoremstyle{remark}
\theoremstyle{remark}
\DeclareMathOperator{\ad}{ad}
\DeclareMathOperator{\Ad}{Ad}
\DeclareMathOperator{\Aut}{Aut}
\DeclareMathOperator{\mysp}{\g{sp}}
\DeclareMathOperator{\Pu}{Pu}
\DeclareMathOperator{\so}{\g{so}}
\DeclareMathOperator{\Spin}{Spin}
\DeclareMathOperator{\su}{\g{su}}
\DeclareMathOperator{\tr}{tr}
\newcommand{\C}{\ensuremath{\mathbb{C}}}
\newcommand{\Ca}{\ensuremath{\mathbb{O}}}
\newcommand{\eS}{\ensuremath{\mathrm{S}}}
\newcommand{\g}[1]{\operatorname{\ensuremath{\mathfrak{#1}}}}
\newcommand{\GL}{\ensuremath{\mathrm{GL}}}
\newcommand{\Lie}[1]{\operatorname{\sf{#1}}}
\newcommand{\oo}{\ast}
\newcommand{\R}{\ensuremath{\mathbb{R}}}
\renewcommand\Re{\mathrm{Re}}
\renewcommand{\H}{\ensuremath{\mathbb{H}}}
\begin{document}


\title{The bracket of the exceptional Lie algebra E8}
\author[A.\ Kollross]{Andreas Kollross}
\address{Institut f\"{u}r Geometrie und Topologie, Universit\"{a}t Stuttgart, Germany}
\email{kollross@mathematik.uni-stuttgart.de}
\date{\today}

\begin{abstract}
We obtain an explicit formula for the bracket of the exceptional simple Lie algebra~E8 based on triality and oct-octonions, following the Barton-Sudbery description of~E8. Furthermore, we provide descriptions of the subalgebras~E6 and~E7 and prove an explicit formula for the 26-dimensional irreducible representation of~F4.
\end{abstract}


\subjclass[2020]{17B25, 17A35, 22E60, 53C35}

\keywords{exceptional Lie algebra, octonions}

\maketitle


\section{Introduction}
\label{sec:intro}


The exceptional Lie groups and Lie algebras $\mathrm{G}_2$, $\mathrm{F}_4$, $\mathrm{E}_6$, $\mathrm{E}_7$, $\mathrm{E}_8$ were first described by Wilhelm Killing~\cite{killing,coleman}. The exceptional Lie groups have since made many appearances in geometry, topology, and physics. However, to this day, they still remain somewhat elusive and mysterious. 

Of the five exceptional Lie algebras $\g{g}_2$, $\g{f}_4$, $\g{e}_6$, $\g{e}_7$, $\g{e}_8$ the $248$-dimensional algebra~$\g{e}_8$ is the one with the highest dimension, and in many regards also the most difficult to deal with. For instance, since the lowest dimensional non-trivial representation is $248$-dimensional, it is not a good option to use a matrix representation of~$\g{e}_8$, an approach that is viable for~$\g{g}_2$, which has a $7$-dimensional faithful representation. For this and other reasons, the title of the article~\cite{garibaldi} refers to~$\mathrm{E}_8$ as the \emph{``most exceptional Lie group''} and it has even been argued in~\cite{garibaldi} that $\mathrm{E}_8$ could be called \emph{``the Monster of Lie theory''.}

Many different descriptions and constructions of~$\g{e}_8$ and other exceptional Lie algebras can already be found in the literature, as will be discussed below. It appears that exceptional Lie algebras and exceptional Lie groups have many different facets and thus there is no single one-size-fits-all approach to them, but different aspects of exceptional Lie groups lead to different constructions. Let us briefly review some known constructions of~$\g{e}_8$ and other Lie algebras, without aiming for completeness. 

\paragraph{\emph{Construction using a subalgebra.}} 
  One may construct a model for~$\g{g}=\g{e}_8$ and other Lie algebras by starting with a subalgebra~$\g{h}$ of~$\g{g}$.
  When restricted to~$\g{h}$, the adjoint representation of~$\g{g}$ splits as $\g{g} = \g{h} \oplus V$, where the real vector space~$V$ is an adjoint representation of. In order to define the Lie bracket on~$\g{g}$, one only has to define the bracket operation on~$V \times V$, since the bracket on~$\g{h} \times \g{h}$ and on~$\g{h} \times V$ is already given.
  This approach is employed in the articles~\cite{adams,draperetal,ms}.
  Various choices for the Lie algebra~$\g{h}$ lead to different models for~$\g{g}$. It appears to be most natural and convenient to choose the subalgebra~$\g{h}$ to be of full rank. 
  In the articles~\cite{adams} and ~\cite{ms}, the subalgebra~$\g{h}=\g{so}_{16}$ is used.
  In~\cite{draperetal} this procedure is applied to 14~different subalgebras of~$\g{e}_8$, but no explicit expressions for the Lie bracket are given. 
  
\paragraph{\emph{Construction using derivations.}} 
  It is well known that the compact Lie group~$\Lie{F}_4$ can be identified with the automorphism group of an Albert algebra and thus its Lie algebra can be constructed as the algebra of derivations of this Albert algebra. The Tits construction~\cite{tits}, see also~\cite{elduque,vinberg}, is an elegant and powerful generalization of this fact which also encompasses all the Lie algebras in Freudenthal's magic square. In~\cite{tits} and~\cite{vinberg}, formulae are given for the brackets of these Lie algebras. It is remarked in~\cite{wdm}: ``However, the definition of the Lie bracket given by Vinberg is not straightforward, and involves some quite complicated and unintuitive formulae.''. In this paper, we will give a more natural and simpler explicit formula for the bracket of~$\g{e}_8$.

\paragraph{\emph{Construction as a commutator algebra of matrix algebra.}} 
 A relatively new approach was recently used in~\cite{wdm}. It imitates the construction of classical Lie algebras arising from associative algebras using the commutator of matrices as a Lie bracket. However, since the octonions, which are used to define the matrix algebra, are non-associative, one cannot use the usual matrix multiplication and the commutator given by it, but the matrix multiplication has to be modified.

In this paper, we present an explicit formula for the Lie bracket of the compact real form of~$\g{e}_8$, which has the potential to drastically simplify calculations with~$\g{e}_8$ in certain situations. Our approach is based on the first of the three construction methods discussed above, namely, the Lie algebra~$\g{g} = \g{e}_8$ is built from its subalgebra $\g{h} = \g{so}_8 \oplus \g{so}_8$. As pointed out above, it only remains to define the bracket of~$\g{g} = \g{h} \oplus V$ on~$V \times V$ which we can do using a very a natural map, which is essentially just the multiplication in the algebra~$\Ca \otimes \Ca$. 
The appeal of this approach is that $\g{e}_8$ is built from scratch using little more than octonionic multiplication and the resulting formula~(\ref{eq:e8bracket}) is very concise, symmetric and intuitive.

Indeed, besides standard linear algebra operations, our formula only involves multiplication of Cayley numbers and the triality automorphism of~$\so(8)$ (which is also described using octonionic multiplication). 
Our description is based on the decomposition
\[
\g{e}_8 \cong \so(8) \oplus \so(8) \oplus (\Ca \otimes \Ca)^3,
\]
which is known as the \emph{Barton-Sudbery description} of~$\g{e}_8$, see~\cite{BaSu03}, by~Baez~\cite{Baez02}.

In~\cite{BaezWeb}, Baez writes:
\emph{``To emphasize the importance of triality, we can rewrite the Barton-Sudbery description of~$\g{e}_8$ as
\[
\g{e}_8 \cong \so(8) \oplus \so(8) \oplus (V_8 \otimes V_8) \oplus (S^+_8 \otimes S^+_8) \oplus (S^-_8 \otimes S^-_8)
\]
Here the Lie bracket is built from natural maps relating $\so(8)$ and its three 8-dimensional irreducible representations. In particular, $\so(8) \oplus \so(8)$ is a Lie subalgebra, and the first copy of $\so(8)$ acts on the first factor in $V_8 \otimes V_8$, $S_8^+ \otimes S_8^+$, and $S_8^- \otimes S_8^-$, while the second copy acts on the second factor in each of these. This has a pleasant resemblance to the triality description of $\g{f}_4$ [...]:
\[
\g{f}_4 \cong \so(8) \oplus V_8 \oplus S^+_8 \oplus S^-_8. \hbox{''}
\]
}
In the article~\cite{K18}, the author obtained an explicit bracket formula~(\ref{eq:f4bracket}) for~$\g{f}_4$ in order to study a geometric problem on the Cayley hyperbolic plane. This formula is based on the above-mentioned triality description of~$\g{f}_4$.
It appears that formulae of this type are particularly suitable for studying geometric problems involving Lie group actions and submanifold geometry,
see also~\cite{bdmn}, where the formula~(\ref{eq:f4bracket}) is applied to a geometric problem.

It turns out that there is a natural generalization~(\ref{eq:e8bracket}) of this formula for the bracket of~$\g{e}_8$. We obtain it by replacing~$\so(8)$ with~$\so(8) \oplus \so(8)$ and replacing the octonions~$\Ca$ with the real algebra~$\Ca \otimes \Ca$, the \emph{oct-octonions}. The main purpose of this paper is to prove that the formula~(\ref{eq:e8bracket}) actually defines a Lie algebra, i.e., satisfies the Jacobi identity, and this Lie algebra can be endowed with an invariant scalar product, showing that the resulting simple real Lie algebra is isomorphic to the compact real form of~$\g{e}_8$. This can also be regarded as a new existence proof for~$\g{e}_8$.

As applications, we provide descriptions of the subalgebras~$\g{e}_6$ and~$\g{e}_7$ of~$\g{e}_8$ and prove an explicit formula for the 26-dimensional irreducible representation of~$\g{f}_4$ at the end of the paper.


\section{Preliminaries}
\label{sec:prelim}


Recall that the \emph{octonions}, or \emph{Cayley numbers}, are an $8$-dimensional non-associative and non-commutative real division algebra~$\Ca$. We identify~$\Ca$ with~$\R^8$ by choosing a basis $e_0, \dots, e_7$ of~$\Ca$ such that $e := e_0$
is the multiplicative identity, we have $e^2_1 = \dots = e^2_7 = -e$ and the following multiplication rules:
\begin{equation}\label{eq:omult}
\begin{array}{llll}
  e_1e_2=e_3,\quad & e_1e_4=e_5,\quad & e_2e_4=e_6,\quad & e_3e_4=e_7, \\
  e_5e_3=e_6, & e_6e_1=e_7, & e_7e_2=e_5. &
\end{array}
\end{equation}
From these rules, the whole multiplication table of~$\Ca$ can be obtained using the fact that the alternative laws $(xx)y=x(xy)$ and $(xy)y=x(yy)$ hold for all $x,y \in \Ca$. 
We define \emph{octonionic conjugation} as the $\R$-linear map $\gamma \colon \Ca \to \Ca, a \mapsto \bar a$, given by
\[
\gamma(e_s) =
\left\{
  \begin{array}{ll}
    e_0, & \hbox{if~$s=0$;} \\
    -e_s, & \hbox{if~$s \ge 1$.}
  \end{array}
\right.
\]
We identify the real numbers~$\R$ with the subalgebra of~$\Ca$ spanned by~$e$ and define the \emph{real part} of an octonion~$x$ as the real number $\Re(x) := \frac12 (x + \bar x)$ and the \emph{pure part} as the octonion $\Pu(x) := \frac12(x - \bar x)$.
The \emph{norm} of an octonion~$a$ is defined as $|a| = \sqrt{a \bar a}$.
Recall that~$\Ca$ is a normed algebra, i.e., we have $|ab|=|a|\,|b|$ for all $a,b \in \Ca$. For $u,v \in \R^8 = \Ca$, we denote by $\langle u,v \rangle = \Re(u \bar v) = u^tv$ the standard scalar product of~$\R^8$.

Let~$\so(8)$ be the special orthogonal Lie algebra in~$8$~dimensions, which consists of the real skew-symmetric $8 \times 8$~matrices, where the bracket is given by the commutator of matrices. There is a map $\Ca \times \Ca \to \so(8)$, $(x,y) \mapsto x \wedge y$, defined by
\begin{equation}\label{eq:owedge}
x \wedge y := xy^t - yx^t,
\end{equation}
where, on the right hand side, the usual matrix multiplication is understood and $y^t$ is the row vector which is the transpose of~$y$. The elements $e_i \wedge e_j$, $0 \le i < j \le 7$, form a basis of~$\so(8)$.

For $a \in \Ca$, we define the $\R$-linear maps $L_a, R_a \colon \Ca \to \Ca$ as~$L_a(x) = ax$ and $R_a(x) = xa$, i.e., the left and right multiplication by~$a$.
Recall from~\cite{freudenthal}, \cite{murakami}, or~\cite{K18} that an automorphism of~$\so(8)$ of order three is given by the map
\begin{equation}\label{eq:lambdadef}
\lambda(a \wedge b) = \tfrac12 L_{\bar b} \circ L_{\bar a}  \hbox{~for~$a \in \Pu(\Ca), b \in \Ca$}.
\end{equation}
Recall furthermore that $\lambda^2 = \lambda \circ \lambda$ is given by
\begin{equation}\label{eq:lambda2def}
\lambda^2(a \wedge b) = \tfrac12 R_{\bar b} \circ R_{\bar a}  \hbox{~for~$a \in \Pu(\Ca), b \in \Ca$}.
\end{equation}
We define another automorphism $\kappa \in \Aut(\so(8))$ by
\[
\kappa(a \wedge b) = \bar a \wedge \bar b,
\]
or, equivalently, $\kappa(A)x=\overline{A \bar x}$ for $x \in \R^8$. Then we have
\[
\kappa^2 = \lambda^3 = 1 \qquad\hbox{and}\qquad \kappa \circ \lambda^2 = \lambda \circ \kappa,
\]
see~\cite{freudenthal}, \cite[\S2, Thm.~2]{murakami}.
In~\cite{K18} the following explicit expression was obtained for the Lie bracket of the simple compact Lie algebra~$\g{f}_4$.

\begin{theorem}\label{thm:f4bracket}
The binary operation on~$\so(8) \times \Ca^3$ defined by
\begin{align}\label{eq:f4bracket}
[(A,u,v,w),(B,x,y,z)] = (C,r,s,t)
\end{align}
where
\begin{align*}
C &= AB-BA-4u \wedge x - 4\lambda^2(v \wedge y) - 4\lambda(w \wedge z),\\
r &= Ax-Bu+\overline{vz}-\overline{yw},\\
s &= \lambda(A)y-\lambda(B)v+\overline{wx}-\overline{zu},\\
t &= \lambda^2(A)z-\lambda^2(B)w+\overline{uy}-\overline{xv},
\end{align*}
is $\R$-bilinear, skew symmetric and satisfies the Jacobi identity.
The real Lie algebra which is defined in this way is isomorphic to the simple compact Lie algebra~$\g{f}_4$.
\end{theorem}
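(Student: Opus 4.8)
The plan is to regard the four summands of $\so(8) \oplus \Ca^3$ as a $\mathbb{Z}_2 \times \mathbb{Z}_2$-grading, with $\so(8)$ in the neutral component and the three copies $V := \Ca$, $S^+ := \Ca$, $S^- := \Ca$ (on which $\so(8)$ acts through the identity, through $\lambda$, and through $\lambda^2$, respectively) occupying the three nonzero degrees. Inspecting the formula shows that the bracket is homogeneous for this grading: $[V,V],\,[S^+,S^+],\,[S^-,S^-] \subseteq \so(8)$, while $[V,S^+] \subseteq S^-$, $[S^+,S^-] \subseteq V$ and $[S^-,V] \subseteq S^+$. The $\R$-bilinearity is immediate, since $\wedge$, $\lambda$, $\lambda^2$ are $\R$-linear and octonionic multiplication and conjugation are $\R$-(bi)linear. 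Skew-symmetry follows component by component: the commutator $AB-BA$ and the wedge $u \wedge x = ux^t - xu^t$ are manifestly skew, and each cross term such as $\overline{vz} - \overline{yw}$ changes sign under the interchange $(A,u,v,w) \leftrightarrow (B,x,y,z)$.

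The heart of the matter is the Jacobi identity. Since the Jacobiator $J(a,b,c) = [[a,b],c] + [[b,c],a] + [[c,a],b]$ is $\R$-trilinear and, for a skew bracket, changes sign under the transposition of any two arguments, it suffices to verify $J=0$ on triples of homogeneous elements, and antisymmetry lets one fix a canonical ordering of the degrees. I would organize the cases by the multiset of degrees and exploit the cyclic triality symmetry that permutes $(V, S^+, S^-)$ via $\lambda$, which collapses many of them into one. The triple $(\so,\so,\so)$ is the Jacobi identity of the matrix commutator; triples $(\so,\so,\ast)$ reduce to the statement that $V,S^+,S^-$ are $\so(8)$-modules, i.e.\ that $\lambda$ is a Lie algebra automorphism; triples $(\so,\ast,\ast)$ reduce to the $\so(8)$-equivariance of $\wedge$, namely $[A,\,u \wedge x] = (Au)\wedge x + u\wedge(Ax)$ and its $\lambda$-twisted analogues; and a triple within one copy, say $(V,V,V)$, collapses to the symmetric identity $\sum_{\mathrm{cyc}}(u_1 \wedge u_2)u_3 = 0$, which holds because $(u_1 \wedge u_2)u_3 = u_1\langle u_2,u_3\rangle - u_2\langle u_1,u_3\rangle$ with $\langle\cdot,\cdot\rangle$ symmetric.

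The genuinely hard cases are the mixed ones, above all the triple $(V,S^+,S^-)$ with one element in each copy. Tracing the formula, the Jacobiator lands in $\so(8)$ and its vanishing is equivalent to an identity of the shape
\[
\overline{vw}\wedge u + \lambda(\overline{uv}\wedge w) + \lambda^2(\overline{wu}\wedge v) = 0,
\]
together with its companions arising from triples such as $(V,V,S^+)$. I expect this to be the main obstacle: each such identity fuses the full infinitesimal principle of triality — the existence, for each $A \in \so(8)$, of companions $\lambda(A),\lambda^2(A)$ satisfying a relation of the form $A(xy) = (\lambda^2(A)x)\,y + x\,(\lambda(A)y)$ — with the alternative and Moufang identities of $\Ca$ and the relations $\lambda^3 = \kappa^2 = 1$, $\kappa\lambda^2 = \lambda\kappa$. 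The strategy is to reduce each mixed identity to the definitions~(\ref{eq:lambdadef}) and~(\ref{eq:lambda2def}) of $\lambda,\lambda^2$ as products of left and right multiplications, then expand using $\langle u,v\rangle = \Re(u\bar v)$ and the linearized alternative laws, so that all remaining terms cancel in pairs.

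Finally, to identify the resulting $52$-dimensional real Lie algebra with the compact $\g{f}_4$, I would exhibit an invariant inner product of the form $c_0\langle A,B\rangle_{\so(8)} + c_1\langle u,x\rangle + c_2\langle v,y\rangle + c_3\langle w,z\rangle$, with the constants $c_i$ fixed so as to match the factor $-4$ appearing in the component $C$; its invariance, i.e.\ $\langle[a,b],c\rangle = -\langle b,[a,c]\rangle$, again reduces to the homogeneous components and the same triality relations, while its positive-definiteness shows that the algebra is of compact type. Irreducibility and pairwise inequivalence of the three $\so(8)$-modules $V,S^+,S^-$, combined with the fact that the cross-brackets link all four summands, yield simplicity. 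The classification of compact simple Lie algebras then forces the algebra to be $\g{f}_4$, the unique compact simple Lie algebra of dimension~$52$.
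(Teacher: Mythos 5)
First, a note on the comparison itself: the paper does not prove Theorem~\ref{thm:f4bracket} at all --- it imports it from~\cite{K18} --- so the only internal benchmark is the proof of the $\g{e}_8$ analogue, Theorem~\ref{th:e8bracket}. Your architecture coincides with that proof in every structural respect: reduction of the Jacobiator to homogeneous triples by trilinearity, use of the order-three automorphism permuting the three $8$-dimensional summands to collapse cases, disposal of the triples involving $\so(8)$ via the module structure and the infinitesimal triality principle, explicit computation for the remaining triples, and finally an $\ad$-invariant positive definite form together with simplicity, the dimension count and the classification. Your identification of the $\mathbb Z_2\times\mathbb Z_2$-grading and the observation $(u_1\wedge u_2)u_3=u_1\langle u_2,u_3\rangle-u_2\langle u_1,u_3\rangle$ for the $(V,V,V)$ triple are correct. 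The one methodological difference is that the paper first normalizes two of the three octonions to $e$ using the $\Spin(8)$-symmetry before computing, whereas you propose to expand everything with the alternative and Moufang laws; both can work, but the former is what keeps the computation short.

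The gap is that the decisive identities are asserted rather than proved, and the central one does not hold in the form you wrote it. With the conventions of Section~\ref{sec:prelim} ($x\wedge y=xy^t-yx^t$, $\lambda(a\wedge b)=\tfrac12 L_{\bar b}L_a$, $\lambda^2(a\wedge b)=\tfrac12 R_{\bar b}R_a$ for $a$ pure) one has, for $t\in\Pu(\Ca)$, $\lambda(t\wedge e)=\tfrac12L_t$, $\lambda^2(t\wedge e)=\tfrac12R_t$ and $t\wedge e=\tfrac12(L_t+R_t)$, so that $(\id+\lambda+\lambda^2)(t\wedge e)=L_t+R_t\neq0$; indeed $\id+\lambda+\lambda^2$ is three times the projection onto the $\lambda$-fixed subalgebra $\g{g}_2\subset\so(8)$ and is far from zero. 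Evaluating your proposed identity $\overline{vw}\wedge u+\lambda(\overline{uv}\wedge w)+\lambda^2(\overline{wu}\wedge v)=0$ at $u=v=e$, $w=t\in\Pu(\Ca)$ gives exactly $-(\id+\lambda+\lambda^2)(t\wedge e)=-(L_t+R_t)\neq0$: all three wedge terms carry the same orientation and nothing cancels. The identity that actually closes the corresponding case in the proof of Theorem~\ref{th:e8bracket} (case (ii)(a)) is $e\wedge\bar t+\lambda(\bar t\wedge e)+\lambda^2(\bar t\wedge e)=(\lambda+\lambda^2-\id)(\bar t\wedge e)=0$, in which the \emph{untwisted} term has the opposite orientation to the two twisted ones. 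Since $\id+\lambda+\lambda^2\neq0$, this relative orientation is precisely where the content of the proof sits; ``the remaining terms cancel in pairs'' cannot be taken on faith, and you must actually push the three octonions through the bracket formula and track the signs for the $(V,S^+,S^-)$ and $(V,V,S^+)$ triples. Relatedly, the triality relation you quote, $A(xy)=(\lambda^2(A)x)y+x(\lambda(A)y)$, omits the conjugation: with the $\lambda$ of~(\ref{eq:lambdadef}) the correct statement is $(Au)v+u\lambda(A)v=\kappa\circ\lambda^2(A)(uv)$, and the $\kappa$ cannot be dropped without breaking the cancellation.
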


In the following, we will obtain an analogous formula for the Lie bracket of~$\g{e}_8$.
This will be done by replacing $\so(8)$ with $\so(8) \oplus \so(8)$ and replacing $\Ca$ with the oct-octonions~$\Ca \otimes \Ca$. The formula for the Lie bracket of~$\g{e}_8$ that we obtain in this fashion resembles the above formula very closely, however, the natural action of~$\so(8)$ on~$\R^8$ is replaced by~(\ref{eq:so16act}), the triality automorphism~$\lambda$ is replaced by the simultaneous action of~$\lambda$ on both summands of~$\so(8) \oplus \so(8)$, which we will denote by the capital Greek letter~$\Lambda$ and the wedge product~$\wedge$ is replaced by the operator~$\curlywedge$ defined in~(\ref{eq:curlywedge}) below.


\section{\texorpdfstring{Oct-octonions and~$\so(8)\oplus\so(8)$}{Oct-octonions and so(8)x so(8)}}\label{sec:octonions}


We define the \emph{oct-octonions} to be the real algebra~$\Ca \otimes \Ca$, where the multiplication is given by
\[
(a \otimes b)\oo(c \otimes d) = ac \otimes bd.
\]
Using the identification $\Ca=\R^8$, we may identify $\Ca \otimes \Ca$ with $\R(8)$, the space of real $8 \times 8$~matrices. Indeed, viewing octonions as column vectors in~$\R^8$, we make the following identification for $x,y\in \Ca$:
\begin{equation}\label{eq:OOMAt}
x \otimes y = xy^t,
\end{equation}
where, on the right hand side, the usual matrix multiplication is understood and $y^t$ is the row vector which is the transpose of~$y$. We have
\[
(x \otimes y)^t = (xy^t)^t = yx^t = y \otimes x
\]
for $x,y \in \Ca$. Note that a notational ambiguity could arise from this identification, since now we have defined two different binary operations on~$\Ca \otimes \Ca = \R(8)$, oct-octonionic multiplication and the usual matrix multiplication. We avoid this ambiguity by writing $x \oo y$ for oct-octonionic multiplication, where $x,y\in \Ca \otimes \Ca$ and by denoting the matrix product by juxtaposition.

For $A = (P,Q) \in \so(8) \oplus \so(8)$ and $X \in \Ca \otimes \Ca = \R(8)$, we define
\begin{equation}\label{eq:so16act}
A.X := PX - XQ = PX+XQ^t,
\end{equation}
where the terms on the right hand side are defined by using the usual matrix multiplication, viewing $P,Q,X$ as real $8 \times 8$~matrices.
In particular, the Lie algebra $\so(8) \oplus \so(8)$ acts on~$\R^8 \otimes \R^8$ by the outer tensor product representation of the two standard representations of the two summands in~$\so(8) \oplus \so(8)$, i.e., we have
\begin{equation}\label{eq:PQact}
(P,Q).(r \otimes s) = (P,Q).sr^t = Prs^t+rs^tQ^t=(Pr) \otimes s + r \otimes (Qs)
\end{equation}
for $(P,Q) \in \so(8) \times \so(8)$ $X = r \otimes s$, $r,s \in \Ca$,
We furthermore define for $X,Y \in \R(8)$:
\begin{equation}\label{eq:curlywedge}
X \curlywedge Y := (XY^t-YX^t,X^tY-Y^tX) \in \so(8) \times \so(8).
\end{equation}
Note that in this definition, matrix multiplication is understood (instead of oct-octonionic multiplication).
Let $p,q,r,s \in \Ca$. We compute for later use:
\begin{align}
(p \otimes q) &\curlywedge (r \otimes s) = \nonumber \\
&=((p \otimes q)(r \otimes s)^t-(r \otimes s)(p \otimes q)^t,(p \otimes q)^t(r \otimes s)-(r \otimes s)^t(p \otimes q))= \nonumber \\
&=(pq^tsr^t-rs^tqp^t,qp^trs^t-sr^tpq^t)= \nonumber \\
&=(\langle q,s \rangle p \wedge r, \langle p,r \rangle q \wedge s). \label{eq:pqrs}
\end{align}

\begin{lemma}\label{lm:so16}
Let $(\so(8)\oplus\so(8)) \times \R(8)$ be equipped with the binary operation defined by
\[
[(A,X),(B,Y)] := (AB-BA - 4 X \curlywedge Y, A.Y-B.X),
\]
for $A,B \in \so(8)\oplus\so(8)$ and $X,Y \in \R(8)$.
Then the real algebra defined in this fashion is isomorphic to the Lie algebra~$\so(16)$.
\end{lemma}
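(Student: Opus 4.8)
The plan is to exhibit an explicit linear isomorphism from the given algebra onto $\so(16)$ realised in its standard block form, and to check that this map intertwines the two brackets. Once the intertwining is established, $\R$-bilinearity, skew-symmetry and the Jacobi identity for the abstract operation follow automatically, since they hold for the matrix commutator on $\so(16)$; thus the single verification also proves that the operation is a Lie bracket in the first place.

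Writing a skew-symmetric $16 \times 16$ matrix in $8 \times 8$ blocks adapted to the splitting $\R^{16} = \R^8 \oplus \R^8$, skew-symmetry forces the diagonal blocks $P,Q$ to lie in $\so(8)$ and leaves the off-diagonal block arbitrary, giving
\[
\so(16) = \left\{ \begin{pmatrix} P & X \\ -X^t & Q \end{pmatrix} : P,Q \in \so(8),\ X \in \R(8) \right\},
\]
which matches the underlying vector space and the dimension count $28+28+64 = 120 = \dim \so(16)$. I would define the linear map
\[
\Phi \colon (\so(8)\oplus\so(8)) \times \R(8) \longrightarrow \so(16), \qquad \Phi(A,X) = \begin{pmatrix} P & 2X \\ -2X^t & Q \end{pmatrix} \quad \text{for } A=(P,Q),
\]
which is manifestly a linear bijection. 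The factor $2$ is chosen to absorb the coefficient $4$ appearing in the bracket.

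The heart of the proof is a single block-matrix computation. Writing $A=(P_1,Q_1)$, $B=(P_2,Q_2)$ and forming the commutator of $\Phi(A,X)$ and $\Phi(B,Y)$, the two diagonal blocks come out as $P_1P_2-P_2P_1-4(XY^t-YX^t)$ and $Q_1Q_2-Q_2Q_1-4(X^tY-Y^tX)$, which by the definition~(\ref{eq:curlywedge}) is exactly the first component $AB-BA-4\,X\curlywedge Y$ of the abstract bracket; here the coefficient $4=2^2$ is produced by the two factors of $2$. The upper-right block equals $2(P_1Y+XQ_2-P_2X-YQ_1)$, and using $A.Y=P_1Y-YQ_1$ and $B.X=P_2X-XQ_2$ from~(\ref{eq:so16act}) this is precisely $2(A.Y-B.X)$, matching the second component after applying $\Phi$. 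The lower-left block is then automatically $-2(A.Y-B.X)^t$ by skew-symmetry, so it requires no separate check.

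I expect the only real obstacle to be bookkeeping: keeping the signs consistent across the four blocks and confirming that the transposed structure built into $\curlywedge$ is reproduced by the commutator. No deeper difficulty arises, since the block description of $\so(16)$ is standard and the calculation is finite. Having verified $\Phi([(A,X),(B,Y)]) = [\Phi(A,X),\Phi(B,Y)]$, I conclude that $\Phi$ is a Lie algebra isomorphism, which simultaneously shows that the abstract operation defines a Lie algebra and identifies it with $\so(16)$.
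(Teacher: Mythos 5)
Your proposal is correct and coincides with the paper's own proof: the same isomorphism $(A,X)=((P,Q),X)\mapsto\left(\begin{smallmatrix} P & 2X \\ -2X^t & Q \end{smallmatrix}\right)$ is used there, and the verification is the same block-commutator computation. The only (valid) addition is your explicit remark that the intertwining also transports the Jacobi identity back from $\so(16)$.
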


\begin{proof}
We will check that an isomorphism is given by the map
\[
((P,Q),X) \mapsto
\begin{pmatrix}
  P & 2X \\
  -2X^t & Q \\
\end{pmatrix}.
\]
Indeed, let us compute the bracket
\begin{align*}
&\left[
\begin{pmatrix}
  P & 2X \\
  -2X^t & Q \\
\end{pmatrix}\right.,
\left.\begin{pmatrix}
  R & 2Y \\
  -2Y^t & S \\
\end{pmatrix}
\right] = \\
&\;\;\;\;=\begin{pmatrix}
  PR-RP-4XY^t+4YX^t & 2PY-2YQ-2RX+2XS \\
  2Y^tP-2QY^t-2X^tR+2SX^t & QS-SQ-4X^tY+4Y^tX \\
\end{pmatrix} =\\
&\;\;\;\;=\begin{pmatrix}
  [P,R]-4(XY^t-YX^t) & 2(P,Q).Y-2(R,S).X \\
  -(2(P,Q).Y-2(R,S).X)^t & [Q,S]-4(X^tY-Y^tX) \\
\end{pmatrix}
\end{align*}
This proves the statement of the lemma
\end{proof}
We define \emph{conjugation} $\Ca \otimes \Ca \to \Ca \otimes \Ca$, $x \mapsto \bar x$, of oct-octonions to be the $\R$-linear map given by
\[
\overline{a \otimes b} := \bar a \otimes \bar b
\]
for $a,b \in \Ca$.
Conjugation of oct-octonions is an anti-automorphism of $\Ca \otimes \Ca$, i.e., we have $\overline{xy}=\bar y \bar x$ for $x,y \in \Ca \otimes \Ca$, this follows from the fact that conjugation of octonions is an antiautomorphism of~$\Ca$.
We define an outer automorphism of order three of~$\so(8) \oplus \so(8)$ by
\begin{align}\label{eq:caplambda}
\Lambda \colon \so(8) \oplus \so(8) &\to \so(8) \oplus \so(8), \\
(P,Q) &\mapsto (\lambda(P),\lambda(Q)). \nonumber
\end{align}


\section{\texorpdfstring{The Lie algebra~$\mathfrak{e}_8$}{The Lie algebra e8}}\label{sec:e8}


\begin{theorem}\label{th:e8bracket}
The binary operation on~$\mathcal A = (\so(8)\oplus\so(8)) \times (\Ca \otimes \Ca)^3$ defined by
\begin{align}\label{eq:e8bracket}
[(A,u,v,w),(B,x,y,z)] = (C,r,s,t)
\end{align}
where
\begin{align*}
C &=[A,B]-4u \curlywedge x - 4\Lambda^2(v \curlywedge y) - 4\Lambda(w \curlywedge z),\\
r &= A.x-B.u+\overline{v \oo z}-\overline{y \oo w},\\
s &= \Lambda(A).y-\Lambda(B).v+\overline{w \oo x}-\overline{z \oo u},\\
t &= \Lambda^2(A).z-\Lambda^2(B).w+\overline{u \oo y}-\overline{x \oo v},
\end{align*}
and where $A,B \in \so(8)\oplus\so(8)$, $u,v,w,x,y,z \in \Ca \otimes \Ca$,
is $\R$-bilinear, skew symmetric and satisfies the Jacobi identity.
The real Lie algebra defined in this way is isomorphic to the Lie algebra of the compact exceptional simple Lie group of type~$\rm E_8$.
\end{theorem}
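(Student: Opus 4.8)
The plan is to verify the three algebraic properties in turn and then identify the resulting algebra, exploiting a $\mathbb{Z}/2$-grading carried by the formula together with a labour-saving triality symmetry.

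\emph{Bilinearity and skew-symmetry.} These require no work of substance. $\R$-bilinearity is immediate because $C,r,s,t$ are assembled from the bilinear operations $[\,\cdot\,,\cdot\,]$, $\curlywedge$, the action $.$ of \eqref{eq:so16act}, and the oct-octonion product $\oo$. Skew-symmetry is equally formal: the commutator $[A,B]$ and the operator $\curlywedge$ are skew (directly from \eqref{eq:curlywedge}), while $\Lambda,\Lambda^2$ are linear and each vector component manifestly changes sign when the two arguments are interchanged.

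\emph{A triality symmetry and a $\mathbb{Z}/2$-grading.} First I would record the order-three map $\Theta(A,u,v,w):=(\Lambda(A),v,w,u)$. Exactly as for the $\g{f}_4$ bracket of Theorem~\ref{thm:f4bracket}, one checks that $\Theta$ is an automorphism of the bracket; this verification is pure relabelling and uses only that $\Lambda$ is an automorphism of $\so(8)\oplus\so(8)$ with $\Lambda^3=\id$, not any octonionic identity. Next, setting all but the first vector slot to zero, the bracket of $(A,u,0,0)$ with $(B,x,0,0)$ equals $\bigl([A,B]-4u\curlywedge x,\,A.x-B.u,\,0,\,0\bigr)$, which is precisely the $\so(16)$-bracket of Lemma~\ref{lm:so16}. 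Hence $\g{g}_0:=(\so(8)\oplus\so(8))\times(\Ca\otimes\Ca)\times\{0\}\times\{0\}\cong\so(16)$ is a subalgebra, and applying $\Theta$ exhibits the two analogous subspaces as further copies of $\so(16)$. A short inspection of the remaining brackets shows that, writing $\g{g}_1:=\{0\}\times\{0\}\times(\Ca\otimes\Ca)\times(\Ca\otimes\Ca)$, the decomposition $\mathcal A=\g{g}_0\oplus\g{g}_1$ is a $\mathbb{Z}/2$-grading: $[\g{g}_0,\g{g}_0]\subseteq\g{g}_0$, $[\g{g}_0,\g{g}_1]\subseteq\g{g}_1$, and $[\g{g}_1,\g{g}_1]\subseteq\g{g}_0$. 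Here $\dim\g{g}_0=120$ and $\g{g}_1$ is the $128$-dimensional half-spinor module of $\so(16)$.

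\emph{The Jacobi identity.} Relative to this grading the Jacobi identity splits into four types of triples, according to how many arguments lie in $\g{g}_1$. The $(0,0,0)$ case is free, since $\g{g}_0\cong\so(16)$ is already a Lie algebra by Lemma~\ref{lm:so16}. The $(0,0,1)$ case asserts that $X\mapsto\ad_X|_{\g{g}_1}$ is a representation of $\g{g}_0$, and the $(0,1,1)$ case asserts that the bracket map $\g{g}_1\times\g{g}_1\to\g{g}_0$ is $\g{g}_0$-equivariant. \emph{The crux, and the step I expect to be the main obstacle, is the $(1,1,1)$ case}, the cyclic identity $\sum_{\mathrm{cyc}}[[\phi,\psi],\chi]=0$ for $\phi,\psi,\chi\in\g{g}_1$. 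All three mixed cases are governed by a short list of oct-octonionic triality identities, the analogues for $(\Ca\otimes\Ca,\oo,\Lambda)$ of the infinitesimal triality relations for $(\Ca,\,\cdot\,,\lambda)$ underlying Theorem~\ref{thm:f4bracket}. Concretely I would establish, for $A\in\so(8)\oplus\so(8)$ and $X,Y\in\Ca\otimes\Ca$, relations of the shape
\[
\Lambda(A).\overline{X\oo Y}=\overline{(A.X)\oo Y}+\overline{X\oo\bigl(\Lambda^2(A).Y\bigr)}
\]
together with the identities tying $\curlywedge$ to $\oo$ (the oct-octonionic counterpart of the relation between $x\wedge y$ and $\overline{xy}$). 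Because $\oo$ is the tensor product of the two octonion multiplications and $\Lambda$ acts factor-wise, each such identity should follow by applying the octonionic triality principle in each tensor factor and then reconciling it with the two-sided action $A.X=PX-XQ$; keeping track of the $\Lambda$-powers and the conjugations across the three slots is where the real care lies. The symmetry $\Theta$ and skew-symmetry cut the number of genuinely distinct sub-cases down substantially.

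\emph{Identification with compact $\g{e}_8$.} Once $\mathcal A$ is known to be a $248$-dimensional Lie algebra, I would produce an $\ad$-invariant positive-definite inner product, for instance
\[
B\bigl((A,u,v,w),(A',u',v',w')\bigr)=c_0\,\langle A,A'\rangle+c_1\bigl(\tr(u^tu')+\tr(v^tv')+\tr(w^tw')\bigr),
\]
where $\langle\cdot,\cdot\rangle$ is the positive-definite invariant form on $\so(8)\oplus\so(8)$, and fix the ratio $c_1/c_0$ (dictated by the factors of $4$) so that $B$ is invariant; definiteness is then manifest, so $\ad$ maps $\mathcal A$ into $\so(\mathcal A)$ and $\mathcal A$ is of compact type. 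For simplicity I would observe that, as a $\g{g}_0$-module, $\mathcal A=\g{g}_0\oplus\g{g}_1$ is the sum of two inequivalent irreducibles, the adjoint module of $\so(16)$ and the half-spinor module; hence the only $\g{g}_0$-submodules, and in particular the only candidate ideals, are $0,\g{g}_0,\g{g}_1,\mathcal A$. Neither $\g{g}_0$ nor $\g{g}_1$ is an ideal, since the grading relations $[\g{g}_0,\g{g}_1]=\g{g}_1$ and $[\g{g}_1,\g{g}_1]=\g{g}_0$ move them out of themselves, so $\mathcal A$ is simple. A simple compact Lie algebra of dimension $248$ must be the compact real form of $\g{e}_8$, which completes the identification.
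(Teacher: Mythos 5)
Your overall strategy coincides with the paper's: the same cyclic automorphism (the paper's $\tau$), the same identification of $(\so(8)\oplus\so(8))\times(\Ca\otimes\Ca)\times\{0\}\times\{0\}$ with $\so(16)$ via Lemma~\ref{lm:so16}, and the same endgame (a positive-definite $\ad$-invariant form, simplicity from inequivalent irreducible summands, a dimension count). Your repackaging of the case analysis through the $\mathbb{Z}/2$-grading $\g{g}_0\oplus\g{g}_1\cong\so(16)\oplus\Delta^+_{16}$ is a legitimate equivalent organization --- the paper itself uses the corresponding involution $(A,u,v,w)\mapsto(A,u,-v,-w)$ in Corollary~\ref{cor:e88bracket} --- and your simplicity argument with two inequivalent $\g{g}_0$-irreducibles is a coarser but valid variant of the paper's five-summand argument over $\so(8)\oplus\so(8)$ (though you still owe the irreducibility of $\g{g}_1$ as an $\so(16)$-module).

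The substantive shortfall is in the Jacobi identity itself. You correctly flag the hard cases but leave them at the level of ``identities of the shape\dots'', and the claim that these ``should follow by applying the octonionic triality principle in each tensor factor'' undersells the difficulty. The equivariance identities of type $[\g{g}_0^{\mathrm{mat}},[\cdot,\cdot]]$, i.e.\ those involving an element of $\so(8)\oplus\so(8)$, do reduce factorwise to the infinitesimal triality principle, exactly as in the paper's case~(i). But the cases with all three arguments in the three copies of $\Ca\otimes\Ca$ require genuinely quadratic identities tying $\curlywedge$, $\oo$ and conjugation together, for instance $\bar x\oo(y\oo z)-\bar y\oo(x\oo z)-4\Lambda(x\curlywedge y).z=0$; this is a Clifford-type relation, not an instance of triality equivariance in each factor. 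The paper's device for discharging these --- absent from your sketch --- is to first prove that $\Spin(8)\times\Spin(8)$ acts on $\mathcal A$ by automorphisms (Lemma~\ref{lm:spin88aut}, which itself rests on the global principle of triality~(\ref{eq:triality})) and then use transitivity on $\eS^7\times\eS^7$ to normalize two of the three oct-octonions to $e\otimes e$, after which each identity collapses to a one-line computation using $2t\wedge e=R_t+L_t$ and the vanishing of $(1+\lambda+\lambda^2)$ applied to $\bar t\wedge e$. Without that normalization, or an equivalent general identity proved by hand, your plan stalls exactly at the step you yourself identify as the crux.
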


It follows directly by inspection that the binary operation~(\ref{eq:e8bracket}) is $\R$-bilinear and skew symmetric. To prove Theorem~\ref{th:e8bracket} we will verify that the Jacobi identity holds and that the resulting Lie algebra is a simple Lie algebra of dimension~$248$ which carries an $\ad$-invariant inner product.
Until we have completed the proof, we will write $\mathcal A$ for the real algebra defined by the operation~(\ref{eq:e8bracket}).

\begin{lemma}\label{lm:tau}
The linear map $\tau \colon \mathcal A \to \mathcal A$, given by
\[
\tau(A,x,y,z) = (\Lambda(A),y,z,x)
\]
is an automorphism of the real algebra~$\mathcal A$.
\end{lemma}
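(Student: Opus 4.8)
The plan is to show that $\tau$ preserves the bracket~(\ref{eq:e8bracket}), i.e.\ that
\[
\tau\bigl([(A,u,v,w),(B,x,y,z)]\bigr) = [\tau(A,u,v,w),\tau(B,x,y,z)]
\]
for all arguments. Since $\tau$ acts by applying $\Lambda$ to the $\so(8)\oplus\so(8)$-component and by cyclically permuting the three oct-octonionic components $(x,y,z)\mapsto(y,z,x)$, the key observation I would exploit is that the bracket formula itself is already built to be \emph{equivariant} under this cyclic permutation: the three scalar-component formulas for $r,s,t$ are obtained from one another by cyclic shifts, and the three terms $u\curlywedge x$, $\Lambda^2(v\curlywedge y)$, $\Lambda(w\curlywedge z)$ in $C$ are likewise cyclically related through powers of $\Lambda$. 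So the proof is essentially a bookkeeping verification that applying $\tau$ to the output reproduces the bracket of the $\tau$-images.

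First I would write out both sides explicitly. On the left, $\tau$ sends the output $(C,r,s,t)$ to $(\Lambda(C),s,t,r)$. On the right, I feed the $\tau$-images $\tau(A,u,v,w)=(\Lambda(A),v,w,u)$ and $\tau(B,x,y,z)=(\Lambda(B),y,z,x)$ into~(\ref{eq:e8bracket}) and read off its four components. The task then splits into two independent checks. For the three oct-octonionic components I must verify that each component of the right-hand bracket matches the correspondingly shifted $s,t,r$; this reduces to three routine identities, each of which is a cyclic relabelling of the defining formulas, using the single fact that $\Lambda$ has order three so that $\Lambda(\Lambda(A))=\Lambda^2(A)$ and $\Lambda(\Lambda^2(A))=A$. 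For the $\so(8)\oplus\so(8)$-component I must check that $\Lambda(C)$ equals the $C$-component of the bracket of the $\tau$-images, which amounts to verifying
\[
\Lambda\bigl([A,B]\bigr)=[\Lambda(A),\Lambda(B)]
\]
together with the cyclic consistency of the three $\curlywedge$-terms under $\Lambda$.

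The \textbf{main obstacle} is the $C$-component, and specifically the interplay between $\Lambda$ and the operator $\curlywedge$. The scalar-component identities are purely formal consequences of the cyclic symmetry of the formula, but for $\Lambda(C)$ I genuinely need $\Lambda$ to be an \emph{algebra} automorphism of $\so(8)\oplus\so(8)$ (so that it commutes with the commutator bracket $[A,B]$), and I need to know how $\Lambda$ intertwines with $\curlywedge$. The first requirement is immediate from~(\ref{eq:caplambda}), since $\Lambda$ is defined componentwise by the automorphism $\lambda$ and hence is itself an automorphism of order three. For the $\curlywedge$-terms I would track, term by term, that after applying $\Lambda$ the three contributions $\Lambda^2(v\curlywedge y)$, $\Lambda(w\curlywedge z)$, $u\curlywedge x$ appear in exactly the positions demanded by the bracket of the cyclically shifted arguments, again using $\Lambda^3=\mathrm{id}$. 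I expect no deeper structural input is needed: once the cyclic symmetry built into~(\ref{eq:e8bracket}) is made explicit and the order-three property of $\Lambda$ is invoked, all four component identities fall out by direct comparison.
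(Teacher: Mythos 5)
Your proposal is correct and is essentially the paper's proof spelled out: the paper's argument is precisely ``inspection of the formula using that $\Lambda$ is an automorphism of order three,'' and your componentwise verification (cyclic shift of $r,s,t$ plus $\Lambda([A,B])=[\Lambda(A),\Lambda(B)]$ and $\Lambda^3=\id$ for the $C$-component) is exactly what that inspection amounts to. The only remark worth making is that the ``interplay between $\Lambda$ and $\curlywedge$'' you flag as the main obstacle is in fact vacuous, since the powers of $\Lambda$ sit outside the $\curlywedge$-terms in~(\ref{eq:e8bracket}) and merely get shifted cyclically, as you yourself conclude.
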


\begin{proof}
Follows directly by inspection of the formula~(\ref{eq:e8bracket}) and using the fact that $\Lambda$ is an automorphism of order three of~$\so(8)\oplus\so(8)$.
\end{proof}

We define an action of the Lie group~$\Spin(8) \times \Spin(8)$ on~$\R(8) = \Ca \otimes \Ca$ by
\[
\Theta(x) := (\theta_1,\theta_2)(u \otimes v) := \theta_1(u) \otimes \theta_2(v)
\]
for $\Theta=(\theta_1,\theta_2) \in \Spin(8) \times \Spin(8)$ and $x = u \otimes v \in \R(8)$ and where it is understood that an element of~$\Spin(8)$ acts by the standard representation of $\Spin(8)$ on~$\R^8$. Since $\Spin(8) \times \Spin(8)$ is simply connected, its automorphism group is canonically isomorphic to the automorphism group of~$\so(8) \oplus \so(8)$ and we denote the automorphism of~$\Spin(8) \times \Spin(8)$ given by $\Lambda \in \Aut(\so(8) \oplus \so(8))$ also by~$\Lambda$. Define an action of~$\Spin(8) \times \Spin(8)$ on $\mathcal A$ by
\[
 \Theta (A,x,y,z) = (\Ad_\Theta(A), \Theta(x), \Lambda(\Theta)(y), \Lambda^2(\Theta)(z) )
\]
for $\Theta \in \Spin(8) \times \Spin(8)$.

\begin{lemma}\label{lm:tauTheta}
We have
\[
\tau( \Theta(A,x,y,z) ) = \Lambda(\Theta)(\tau(A,x,y,z))
\]
for all $\Theta \in \Spin(8) \times \Spin(8)$, $(A,x,y,z) \in \mathcal A$.
\end{lemma}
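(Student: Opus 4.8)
The plan is to prove Lemma~\ref{lm:tauTheta} by direct computation, checking that the two sides agree in each of the four components of~$\mathcal A = (\so(8)\oplus\so(8)) \times (\Ca \otimes \Ca)^3$. Both $\tau$ and the $\Spin(8)\times\Spin(8)$-action are given by explicit formulae, so this is fundamentally a matter of expanding both compositions and comparing, using only the order-three relation $\Lambda^3 = \id$ together with the compatibility between the action on~$\so(8)\oplus\so(8)$ by~$\Ad$ and the action of~$\Lambda$.

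First I would write out the left-hand side: applying $\Theta$ gives $(\Ad_\Theta(A),\Theta(x),\Lambda(\Theta)(y),\Lambda^2(\Theta)(z))$, and then applying $\tau$ cyclically permutes the last three entries while applying $\Lambda$ to the first, yielding
\[
\tau(\Theta(A,x,y,z)) = \bigl(\Lambda(\Ad_\Theta(A)),\ \Lambda(\Theta)(y),\ \Lambda^2(\Theta)(z),\ \Theta(x)\bigr).
\]
Next I would write out the right-hand side: $\tau(A,x,y,z) = (\Lambda(A),y,z,x)$, and then applying $\Lambda(\Theta)$ acts on this as $(\Ad_{\Lambda(\Theta)}(\Lambda(A)),\ \Lambda(\Theta)(y),\ \Lambda(\Lambda(\Theta))(z),\ \Lambda^2(\Lambda(\Theta))(x))$. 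Using $\Lambda^3 = \id$ on the group, the last entry simplifies to $\Theta(x)$ and the third to $\Lambda^2(\Theta)(z)$, so the second, third, and fourth components match the left-hand side immediately.

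The component that requires genuine care is the first one, where I must show $\Lambda(\Ad_\Theta(A)) = \Ad_{\Lambda(\Theta)}(\Lambda(A))$; this is the main obstacle, though a conceptually clean one. This is precisely the statement that $\Lambda$, viewed simultaneously as an automorphism of the group $\Spin(8)\times\Spin(8)$ and of its Lie algebra $\so(8)\oplus\so(8)$, intertwines the adjoint action with itself, i.e. that $\Lambda$ is equivariant in the sense $\Lambda \circ \Ad_\Theta = \Ad_{\Lambda(\Theta)} \circ \Lambda$. Since $\Lambda$ is by construction an automorphism of the group whose differential is the automorphism $\Lambda$ of the Lie algebra (using that $\Spin(8)\times\Spin(8)$ is simply connected, so the two are canonically identified as noted in the text preceding the lemma), this intertwining property is the standard naturality of the adjoint representation under automorphisms. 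I would verify it by recalling that for any Lie group automorphism $\phi$ one has $\phi \circ \Ad_g = \Ad_{\phi(g)} \circ \phi$ on the Lie algebra, and applying this with $\phi = \Lambda$. With all four components matched, the lemma follows.
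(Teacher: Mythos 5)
Your proposal is correct and follows essentially the same route as the paper: expand both sides componentwise and observe that the only nontrivial point is the intertwining $\Lambda\circ\Ad_\Theta=\Ad_{\Lambda(\Theta)}\circ\Lambda$, which the paper uses implicitly and you justify by the standard naturality of the adjoint representation under group automorphisms. No gaps.
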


\begin{proof}
We compute
\begin{align*}
\tau( \Theta(A,x,y,z) ) &= \tau(\Ad_\Theta(A), \Theta(x), \Lambda(\Theta)(y), \Lambda^2(\Theta)(z) )=\\
&=(\Ad_{\Lambda(\Theta)}(\Lambda(A)), \Lambda(\Theta)(y), \Lambda^2(\Theta)(z),  \Theta (x)) =\\
&=\Lambda(\Theta)(\Lambda(A), y, z, x) = \Lambda(\Theta)(\tau(A, x, y, z)). \qedhere
\end{align*}
\end{proof}

\begin{lemma}\label{lm:spin88aut}
The map \[(A,x,y,z) \mapsto \Theta (A,x,y,z)\] is an automorphism of the real algebra~$\mathcal A$ for any $\Theta \in \Spin(8) \times \Spin(8)$.
\end{lemma}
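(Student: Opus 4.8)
\section*{Proof proposal}

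The plan is to verify directly that $F_\Theta \colon \mathcal A \to \mathcal A$, $\xi \mapsto \Theta\xi$, intertwines the bracket, i.e.\ that $F_\Theta[\xi,\eta] = [F_\Theta\xi, F_\Theta\eta]$ for all $\xi,\eta \in \mathcal A$. Writing $\xi = (A,u,v,w)$, $\eta=(B,x,y,z)$ and comparing the four components of~(\ref{eq:e8bracket}), the verification splits into checking the compatibility of~$\Theta$ with each of the four building blocks of the bracket: the Lie bracket of $\so(8)\oplus\so(8)$, the action~``$.$'' of~(\ref{eq:so16act}), the operator~$\curlywedge$ of~(\ref{eq:curlywedge}), and the oct-octonion product~$\oo$ together with conjugation. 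Throughout I would use that $\Ad_\Theta$ is a Lie algebra automorphism of $\so(8)\oplus\so(8)$, together with the two purely formal relations $\Ad_{\Lambda^k(\Theta)} = \Lambda^k \circ \Ad_\Theta \circ \Lambda^{-k}$ (valid because $\Lambda$ on the simply connected group and its differential on $\so(8)\oplus\so(8)$ correspond) and $\Lambda^3 = \id$.

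Writing $\Theta = (\theta_1,\theta_2)$ and identifying $\theta_1,\theta_2$ with orthogonal matrices $T_1,T_2$, the action on $X \in \R(8)$ is $\Theta(X) = T_1 X T_2^t$. A short matrix computation using $T_1^t T_1 = T_2^t T_2 = I$ then gives the two equivariance identities $\Theta(X)\curlywedge\Theta(Y) = \Ad_\Theta(X\curlywedge Y)$ and $\Ad_\Theta(A).\Theta(X) = \Theta(A.X)$. Combining the first of these (applied to $\Theta$, $\Lambda(\Theta)$ and $\Lambda^2(\Theta)$) with $\Ad_{\Lambda^k(\Theta)} = \Lambda^k\Ad_\Theta\Lambda^{-k}$ and $\Lambda^3 = \id$, the three $\curlywedge$-terms in the $\so(8)\oplus\so(8)$-component~$C$ transform correctly; for instance $\Lambda^2\bigl(\Lambda(\Theta)(v)\curlywedge\Lambda(\Theta)(y)\bigr) = \Lambda^2\Ad_{\Lambda(\Theta)}(v\curlywedge y) = \Ad_\Theta\Lambda^2(v\curlywedge y)$. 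Since $\Ad_\Theta[A,B]=[\Ad_\Theta A,\Ad_\Theta B]$, this settles the first component.

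The substantive point, and the step I expect to be the main obstacle, is the compatibility of~$\Theta$ with the oct-octonion product and conjugation appearing in $r$, $s$, $t$. Concretely one needs the triality identity
\[
\overline{\Lambda(\Theta)(v)\oo\Lambda^2(\Theta)(z)} = \Theta\bigl(\overline{v\oo z}\bigr), \qquad v,z\in\Ca\otimes\Ca.
\]
By $\R$-bilinearity it suffices to check this on decomposable elements $v = v_1\otimes v_2$, $z = z_1\otimes z_2$, where both $\oo$ and conjugation act factorwise; it thereby reduces to the purely octonionic triality identity $\overline{\lambda(\theta)(a)\cdot\lambda^2(\theta)(b)} = \theta(\overline{ab})$ for $\theta\in\Spin(8)$, $a,b\in\Ca$, which is exactly the relation underlying Theorem~\ref{thm:f4bracket} and is available from the triality references. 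This is the only genuinely octonionic input; everything else is either orthogonality of $T_1,T_2$ or formal manipulation with $\Lambda^3=\id$.

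Finally, to obtain the $s$- and $t$-components from the $r$-component without repeating the argument, I would exploit the cyclic symmetry already recorded in Lemmas~\ref{lm:tau} and~\ref{lm:tauTheta}: replacing $\Theta$ by $\Lambda(\Theta)$ and by $\Lambda^2(\Theta)$ in the displayed identity (and using $\Lambda^3=\id$) yields precisely the transformation laws needed for the terms $\overline{w\oo x}$, $\overline{z\oo u}$ and $\overline{u\oo y}$, $\overline{x\oo v}$, while the action terms $\Lambda^k(A).(\cdot)$ follow from the equivariance of~``$.$'' applied to $\Lambda^k(\Theta)$ together with $\Ad_{\Lambda^k(\Theta)} = \Lambda^k\Ad_\Theta\Lambda^{-k}$. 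Assembling the four components then gives $F_\Theta[\xi,\eta] = [F_\Theta\xi,F_\Theta\eta]$, as desired.
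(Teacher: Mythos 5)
Your proposal is correct and follows essentially the same route as the paper: both arguments reduce everything to the equivariance of $\curlywedge$ and of the action ``$.$'' under orthogonal conjugation, the single triality identity for the oct-octonion product (your $\overline{\lambda(\theta)(a)\lambda^2(\theta)(b)}=\theta(\overline{ab})$ is the paper's $\theta(a)\lambda(\theta)(b)=\kappa\circ\lambda^2(\theta)(ab)$ applied to $\lambda(\theta)$ and conjugated), and the cyclic symmetry $\tau$ together with $\Ad_{\Lambda(\Theta)}=\Lambda\circ\Ad_\Theta\circ\Lambda^{-1}$ to cover the remaining components. The only difference is organizational: you verify the bracket component by component, while the paper checks brackets of elements from individual factors and invokes bilinearity.
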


\begin{proof}
Let $\Theta = (\theta_1, \theta_2) \in \Spin(8) \times \Spin(8)$.
We first prove the automorphism property in some special cases, then we use linearity and the map $\tau$ to deduce the statement of the lemma.
Note that
\begin{align*}
&[\Theta(A,0,0,0),\Theta(B,0,0,0)] =
[(\Ad_\Theta(A),0,0,0),(\Ad_\Theta(B),0,0,0)] = \\
&=(\Ad_\Theta([A,B]),0,0,0) =
\Theta[(A,0,0,0),(B,0,0,0)].
\end{align*}
We compute
\begin{align*}
[\Theta&(A,0,0,0),\Theta(0,x,y,z)]
=[(\Ad_\Theta(A),0,0,0),(0, \Theta  (x), \Lambda(\Theta)  (y), \Lambda^2(\Theta) (z))] = \\
&= (0,\Ad_\Theta(A)(\Theta  (x)), \Lambda(\Ad_\Theta(A))(\Lambda(\Theta)(y)), \Lambda^2(\Ad_\Theta(A))(\Lambda^2(\Theta)(z))) =\\
&=(0,\Theta(Ax),\Lambda(\Theta)(\Lambda(A)y), \Lambda^2(\Theta)(\Lambda^2(A)z)) =\\
&=\Theta(0,Ax,\Lambda(A)y, \Lambda^2(A)z)=\Theta[(A,0,0,0),(0,x,y,z)].
\end{align*}
It follows from the principle of triality, see~\cite[\S2-3]{murakami}, that
\begin{equation}\label{eq:triality}
\theta(a)\lambda(\theta)(b) = \kappa\circ\lambda^2(\theta)(ab)
\end{equation}
for all $a,b \in \Ca$ and all $\theta \in \Spin(8)$.
We will use this in the following computation.
Let $u = p \otimes q$ and $y=r \otimes s$, where $p,q,r,s \in \Ca$.
\begin{align*}
[\Theta&(0,u,0,0),\Theta(0, 0, y, 0 )]
=[(0, \Theta(u),0,0),(0, 0, \Lambda(\Theta)(y), 0 )]=\\
&= (0,0,0,\overline{\Theta(u) \oo \Lambda(\Theta)(y)})=\\
&=(0,0,0,\overline{(\theta_1(p)\otimes\theta_2(q)) \oo (\lambda(\theta_1)(r)\otimes\lambda(\theta_2)(s))})=\\
&=(0,0,0,\overline{\theta_1(p)\lambda(\theta_1)(r) \otimes \theta_2(q)\lambda(\theta_2)(s)})=\\
&=(0,0,0,\overline{\kappa \circ \lambda^2(\theta_1)(pr) \otimes \kappa \circ \lambda^2(\theta_2)(qs)})=\\ &=(0,0,0,\overline{\kappa \circ \lambda^2(\theta_1)(pr)} \otimes \overline{\kappa \circ \lambda^2(\theta_2)(qs)})=\\
&=(0,0,0,\lambda^2(\theta_1)(\overline{pr}) \otimes \lambda^2(\theta_2)(\overline{qs}))=\\
&= (0,0,0,\Lambda^2(\Theta)(\overline{u \oo y}))=\Theta(0,0,0,\overline{u \oo y})=\Theta[(0,u,0,0),(0, 0,y, 0 )],
\end{align*}
where we have used that $\kappa(\theta)(x) = \overline{\theta(\bar x)}$ for all $\theta \in \Spin(8), x \in \Ca$.
Finally, assuming $u = p \otimes q$, $x = r \otimes s$ for $p,q,r,s \in \Ca$ and  $\Theta=(\theta_1,\theta_2)$ we obtain
\begin{align*}
[\Theta&(0,u,0,0),\Theta(0,x,0,0)]=[(0,\Theta(u),0,0),(0,\Theta(x),0,0)]=\\
&=[(0,\Theta(u),0,0),(0,\Theta(x),0,0)]=\\
&=(-4\Theta(u)\curlywedge\Theta(x),0,0,0)=\\
&=(-4(\theta_1pq^t\theta_2^t)\curlywedge(\theta_1rs^t\theta_2^t),0,0,0)=\\
&=(-4(\theta_1pq^t\theta_2^t\theta_2sr^t\theta_1^t-\theta_1rs^t\theta_2^t\theta_2qp^t\theta_1^t,\theta_2qp^t\theta_1^t\theta_1rs^t\theta_2^t-\theta_2sr^t\theta_1^t\theta_1pq^t\theta_2^t),0,0,0)=\\
&=(-4(\theta_1pq^tsr^t\theta_1^t-\theta_1rs^tqp^t\theta_1^t,\theta_2qp^trs^t\theta_2^t-\theta_2sr^tpq^t\theta_2^t),0,0,0)=\\
&=(-4\Ad_\Theta(pq^tsr^t-rs^tqp^t,qp^trs^t-sr^tpq^t),0,0,0)=\\
&=(-4\Ad_\Theta(u \curlywedge x),0,0,0)=\Theta([(0,u,0,0),(0,x,0,0)]).
\end{align*}
Using the skew symmetry and bilinearity of the bracket, the fact that $\tau$ is an automorphism of~$\mathcal A$ and  Lemma~\ref{lm:tauTheta}, the statement of the lemma now follows.
\end{proof}

\begin{lemma}\label{lm:e8jacobi}
The bracket operation on~$\mathcal A$ satisfies the Jacobi identity, i.e., we have
\begin{equation}\label{eq:Jacobi}
[\xi,[\eta,\zeta]]+[\eta,[\zeta,\xi]]+[\zeta,[\xi,\eta]]=0
\end{equation}
for all $\xi, \eta, \zeta \in \mathcal A$.
\end{lemma}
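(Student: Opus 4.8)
The plan is to reduce the full Jacobi identity, which is multilinear and skew-symmetric in its three arguments~$\xi,\eta,\zeta$, to a small number of representative cases by exploiting the two families of symmetries already established. Since the bracket is $\R$-bilinear, it suffices to check~(\ref{eq:Jacobi}) on basis-type elements, i.e.\ triples where each of $\xi,\eta,\zeta$ is either of the form $(A,0,0,0)$ with $A \in \so(8)\oplus\so(8)$, or of one of the three ``pure component'' forms $(0,u,0,0)$, $(0,0,v,0)$, $(0,0,0,w)$. First I would record the two symmetry reductions available: the automorphism $\tau$ of Lemma~\ref{lm:tau}, which cyclically permutes the three $(\Ca\otimes\Ca)$-slots (while applying $\Lambda$ to the $\so$-part), and the full-rank subgroup action of $\Spin(8)\times\Spin(8)$ from Lemma~\ref{lm:spin88aut}. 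Because $\tau$ is an algebra automorphism, the Jacobi expression is $\tau$-equivariant, so any case can be rotated into a normalized one in which, say, the first slot of each argument is privileged; this collapses the combinatorial explosion of slot-choices by a factor of three.

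Next I would enumerate the genuinely distinct cases according to how many of the three arguments lie in the subalgebra $\so(8)\oplus\so(8)$ and, for the remaining arguments, which slots they occupy. The cases with all three arguments in $\so(8)\oplus\so(8)$ reduce to the Jacobi identity for the matrix commutator, which holds trivially. The cases with exactly two arguments in the subalgebra reduce to the statement that each $\Lambda^k(A)$ acts as a derivation-compatible endomorphism, which follows because $\Lambda$ is a Lie algebra automorphism and $A.(\,\cdot\,)$ is a representation; these are essentially the computations already performed inside the proof of Lemma~\ref{lm:spin88aut}. The substantive cases are those with at most one argument in the subalgebra, and here I would use Lemma~\ref{lm:spin88aut} to reduce each vector argument to a decomposable tensor $p\otimes q$ with $p,q\in\Ca$ placed in a convenient slot, since $\Spin(8)\times\Spin(8)$ acts transitively enough on the relevant configurations and the Jacobi expression is $\Spin(8)\times\Spin(8)$-equivariant. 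The payoff of both symmetry reductions is that the identity only needs to be verified on a handful of explicit decomposable elements.

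The main obstacle, and the computational heart of the argument, will be the case where all three arguments are pure vectors occupying all three distinct slots, e.g.\ $\xi=(0,u,0,0)$, $\eta=(0,0,v,0)$, $\zeta=(0,0,0,w)$, together with its variants where two of the three share a slot. In the three-distinct-slots case each double bracket lands back in a single vector slot via a term of the form $\overline{u\oo\overline{v\oo w}}$ (up to the $\Lambda$-twists), while in the shared-slot cases the inner bracket produces an element of $\so(8)\oplus\so(8)$ through the $\curlywedge$ operator, which then acts on the third vector via~(\ref{eq:so16act}); reconciling these requires an identity relating the action of $X\curlywedge Y$ on a third element $Z$ to oct-octonionic triple products. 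I expect this to come down to a Moufang-type or triality identity for~$\Ca\otimes\Ca$, assembled from the octonionic alternative and Moufang laws applied factorwise, combined with the interplay $\kappa\circ\lambda^2=\lambda\circ\kappa$ between $\lambda$ and conjugation. Concretely, I would reduce everything to decomposable tensors via~(\ref{eq:pqrs}) and~(\ref{eq:PQact}), expand the two nested brackets using the triality relation~(\ref{eq:triality}) and the identity~(\ref{eq:lambdadef})--(\ref{eq:lambda2def}) for $\lambda,\lambda^2$, and verify that the cyclic sum collapses; the analogous verification for~$\g{f}_4$ in Theorem~\ref{thm:f4bracket} and~\cite{K18} provides the template, the present case being the ``tensor-squared'' version of that computation.
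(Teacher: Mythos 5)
Your proposal is correct and follows essentially the same route as the paper: reduction by trilinearity to pure-slot triples, use of the automorphism $\tau$ and the $\Spin(8)\times\Spin(8)$-action to normalize to decomposable tensors (with some arguments brought to $e\otimes e$), and an explicit verification of the remaining cases via the (infinitesimal) principle of triality and identities such as $2t\wedge e=R_t+L_t$. The only minor difference is that the paper dispatches the subcase with one $\so(8)\oplus\so(8)$-argument and two vectors in the same slot by citing the $\so(16)$-identification of Lemma~\ref{lm:so16}, whereas you would verify it directly; both work.
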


\begin{proof}
Since the left-hand side of the Jacobi identity~(\ref{eq:Jacobi}) is trilinear in $\xi, \eta, \zeta$, it suffices to show that it holds for all triples of vectors $(\xi,\eta,\zeta)$ where each one of the vectors $\xi,\eta,\zeta$ is an element from one of the four factors of $(\so(8)\oplus\so(8)) \times (\Ca\otimes\Ca) \times (\Ca\otimes\Ca) \times (\Ca\otimes\Ca)$.
Since the map~$\tau$ is an automorphism of~$\mathcal A$, we may furthermore assume the three vectors $\xi,\eta,\zeta$ are either from $(\so(8)\oplus\so(8)) \times (\Ca\otimes\Ca) \times (\Ca\otimes\Ca) \times \{0\}$ or from $\{0\} \times (\Ca\otimes\Ca) \times (\Ca\otimes\Ca) \times (\Ca\otimes\Ca)$.

\begin{enumerate}
\item
Assume first $\xi,\eta,\zeta \in (\so(8)\oplus\so(8)) \times (\Ca\otimes\Ca) \times (\Ca\otimes\Ca) \times \{0\}$. In the special cases where the vectors $\xi,\eta,\zeta$ are only taken from the first two factors or from the first and third factor, it follows from Lemma~\ref{lm:so16} (using the fact that $\tau$ is an automorphism of~$\mathcal A$ in the second case) that \ref{eq:Jacobi} holds. Therefore, we assume
\[
\xi = (A,0,0,0), \quad
\eta = (0,x,0,0), \quad
\zeta = (0,0,y,0).
\]
We compute the three summands on the left-hand side of~(\ref{eq:Jacobi}):
\begin{align*}
[(A,0,0,0),[(0,x,0,0),(0,0,y,0)]]&=(0,0,0,\Lambda^2(A).\overline{x \oo y}), \\
[(0,x,0,0),[(0,0,y,0),(A,0,0,0)]]&=(0,0,0,-\overline{x \oo (\Lambda(A).y)}),\\
[(0,0,y,0),[(A,0,0,0),(0,x,0,0)]]&=(0,0,0,-\overline{(A.x) \oo y}).
\end{align*}
Summing up the fourth components of these elements, and writing $A = (P,Q)$, $x = p \otimes q$, $y = r \otimes s$, where $P,Q \in \so(8)$, $p,q,r,s \in \Ca$, we get
\begin{align*}
\Lambda^2&(A).\overline{x \oo y} - \overline{x \oo (\Lambda(A).y)} - \overline{(A.x) \oo y} =\\
&=\Lambda^2(A).\overline{pr \otimes qs} - \overline{x \oo (\lambda(P)y-y\lambda(Q))} - \overline{(Px-xQ) \oo y} =\\
&=\lambda^2(P)\overline{pr} \otimes \overline{qs} + \overline{pr} \otimes \lambda^2(Q)\overline{qs}
 - \overline{(p \otimes q) \oo (\lambda(P)r \otimes s+r \otimes \lambda(Q)s)}
 -\\&\;\; - \overline{(Pp \otimes q + p \otimes Qq) \oo (r \otimes s)} =\\
&=\lambda^2(P)\overline{pr} \otimes \overline{qs} + \overline{pr} \otimes \lambda^2(Q)\overline{qs}
 - \overline{p\lambda(P)r \otimes qs+pr \otimes q\lambda(Q)s}
 -\\&\ - \overline{(Pp)r \otimes qs + pr \otimes (Qq)s)} =\\
&=\overline{(\kappa \circ \lambda^2(P) (pr)-p\lambda(P)r-(Pp)r) \otimes qs}+\\
 &\;\;+\overline{pr \otimes (\kappa \circ \lambda^2(Q)(qs)-q\lambda(Q)s-(Qq)s)}=\\
&=\overline{0 \otimes qs}+\overline{pr \otimes 0} = 0,
\end{align*}
where we have used the infinitesimal principle of triality, see~\cite[\S2, Thm.~1]{murakami}, which implies $(Au)v+u\lambda(A)v=\kappa \circ \lambda^2(A)(uv)$ for all $A \in \so(8)$ and all $u,v \in \Ca$.

\item
It now remains to study the case where the three elements~$\xi, \eta, \zeta$ from $\{0\} \times (\Ca\otimes\Ca) \times (\Ca\otimes\Ca) \times (\Ca\otimes\Ca)$. Again by using the automorphism~$\tau$, it suffices to study the two subcases where three elements are from three distinct factors in $(\Ca\otimes\Ca) \times (\Ca\otimes\Ca) \times (\Ca\otimes\Ca)$ or from only two distinct factors.
\begin{enumerate}
\item Let us assume
\[
\xi = (0,x,0,0), \quad
\eta = (0,0,y,0), \quad
\zeta = (0,0,0,z)
\]
where $x = e_i \otimes e_j$, $y = e_k \otimes e_\ell$, $z = t \otimes u$, $t,u \in \Ca$.
The group $\Spin(8) \times \Spin(8)$ acts as a group of automorphisms on~$\mathcal A$ by Lemma~\ref{lm:spin88aut} and we may use this action to assume $x=e \otimes e$ and $y=e \otimes e$, since $\Spin(8)$ acts transitively on the product of unit spheres~$\eS^7 \times \eS^7 \subset \R^8 \oplus \R^8$ by the sum of any two of its inequivalent irreducible 8-dimensional representations.
We compute, using~(\ref{eq:pqrs}), 
\begin{align*}
[(0,x,0,0)&,[(0,0,y,0),(0,0,0,z)]]= \\
&= [(0,e \otimes e,0,0), (0,\overline{t \otimes u},0,0)]=\\
&= (-4(e \otimes e) \curlywedge(\bar t \otimes \bar u),0,0,0)=\\
&= (-4(\langle e, \bar u \rangle e \wedge \bar t , \langle e,\bar t \rangle e \wedge \bar u),0,0,0), \\
[(0,0,y,0)&,[(0,0,0,z),(0,x,0,0)]] =\\
&=[(0,0,e \otimes e,0),(0,0,\overline{t \otimes u},0)]=\\
&=(-4\Lambda^2((e \otimes e) \curlywedge(\bar t \otimes \bar u)),0,0,0),\\
&=(-4\Lambda^2((\langle e, \bar u \rangle \bar t \wedge e, \langle e,\bar t \rangle \bar u \wedge e)),0,0,0)=\\
&=(-4(\langle e, \bar u \rangle \lambda^2(\bar t \wedge e), \langle e, \bar t \rangle \lambda^2(\bar u \wedge e)),0,0,0),\\
[(0,0,0,z)&,[(0,x,0,0),(0,0,y,0)]] =\\
&=[(0,0,0,t \otimes u),(0,0,0,e \otimes e)]=\\
&=(-4\Lambda((t \otimes u) \curlywedge (e \otimes e)),0,0,0)=\\
&=(-4\Lambda(\langle e, \bar u \rangle \bar t \wedge e, \langle e, \bar t \rangle \bar u \wedge e)),0,0,0)=\\
&=(-4(\langle e, \bar u \rangle \lambda(\bar t \wedge e), \langle e, \bar t \rangle \lambda(\bar u \wedge e))),0,0,0).
\end{align*}
The sum of these three elements is zero if
\begin{align*}
0&=
e \wedge \bar t  + \lambda^2(\bar t \wedge e) + \lambda(\bar t \wedge e) =
e \wedge \bar u  + \lambda^2(\bar u \wedge e) + \lambda(\bar u \wedge e).
\end{align*}
Note that, by linearity, we may assume $t,u \in \Pu(\Ca)$, since $\bar t \wedge e = 0$ if $t \in \R$ and $\bar u \wedge e = 0$ if $u \in \R$.
Then the above two equations are equivalent to
\begin{align*}
0 = e \wedge 2t  +  R_{t} +  L_{t} =
e \wedge 2u +  R_{u} +  L_{u}.
\end{align*}
It can be directly verified that $2t \wedge e = R_{t} +  L_{t}$ holds for $t \in \Pu(\Ca)$, see~\cite[Equation~(2.2)]{K18}.

\item Now consider\[
\xi = (0,x,0,0), \quad
\eta = (0,y,0,0), \quad
\zeta = (0,0,z,0).
\]
We compute
\begin{align*}
[(0,x,0,0)&,[(0,y,0,0),(0,0,z,0)]]=\\&=[(0,x,0,0),(0,0,0,\overline{y \oo z})]
=(0,0,-\bar x \oo (y \oo z),0), \\
[(0,y,0,0)&,[(0,0,z,0),(0,x,0,0)]]=\\&=[(0,y,0,0),(0,0,0,-\overline{x \oo z})]
=(0,0,\bar y \oo (x \oo z),0),\\
[(0,0,z,0)&,[(0,x,0,0),(0,y,0,0)]]=\\&=[(0,0,z,0),(-4 x \curlywedge y,0,0,0)]
=(0,0,4\Lambda(x \curlywedge y).z,0).
\end{align*}
Let $y= p \otimes q$. Using the $\Spin(8) {\times} \Spin(8)$-action, we may assume $x=z=e \otimes e$. Then the sum of the third components of the above three elements becomes
\begin{align*}
-y &+ \bar y +4\Lambda((e \otimes e) \curlywedge (p \otimes q)).(e \otimes e) =\\
&=-y + \bar y +4(\langle e ,q \rangle \lambda(e \wedge p),\langle e, p \rangle \lambda(e \wedge q)).(e \otimes e)=\\
&=-y + \bar y + 4 (\langle e ,q \rangle\lambda(e \wedge p)e \otimes  e + \langle e, p \rangle e \otimes \lambda(e \wedge q)e )=\\
&=-y + \bar y - 4 (\langle e ,q \rangle \tfrac12 L_{\Pu(p)}(e) \otimes  e + \langle e, p \rangle e \otimes \tfrac12 L_{\Pu(q)}(e) )=\\
&=-p \otimes q + \bar p \otimes \bar q  - 2( \langle e ,q \rangle \Pu(p) \otimes e + \langle e, p \rangle e \otimes \Pu(q) )=0.
\end{align*}
To see that the term in the last line is zero, one may use the identification~(\ref{eq:OOMAt}) to write the summands as matrices. Then $-p \otimes q + \bar p \otimes \bar q$ is a real $8 \times 8$~matrix which has non-zero entries only in the first row and the first column and so is $2( \langle e ,q \rangle \Pu(p) \otimes e + \langle e, p \rangle e \otimes \Pu(q) )$; it is easy to see that these two matrices are equal. 

\item Finally, we compute
\begin{align*}
[(0,x,0,0)&,[(0,0,y,0),(0,0,z,0)]]=\\&=[(0,x,0,0),(-4\Lambda^2(y \curlywedge z),0,0,0)]
 =(0,4\Lambda^2(y \curlywedge z).x,0,0), \\
[(0,0,y,0)&,[(0,0,z,0),(0,x,0,0)]]=\\&=[(0,0,y,0),(0,0,0,-\overline{x \oo z})]
 =(0, - (x \oo z) \oo \bar y,0,0),\\
[(0,0,z,0)&,[(0,x,0,0),(0,0,y,0)]]=\\&=[(0,0,z,0),(0,0,0,\overline{x \oo y})]
=(0,(x \oo y)\oo \bar z,0,0)
\end{align*}
We may again assume $x=z=e \otimes e$, $y=p \otimes q$. The sum of the second components of above three elements of~$\mathcal A$ is then
\begin{align*}
4&\Lambda^2 (y \curlywedge z).x - (x \oo z) \oo \bar y + (x \oo y)\oo \bar z =\\
&=4\Lambda^2 ((p \otimes q) \curlywedge (e \otimes e)).(e \otimes e) - \bar p \otimes \bar q + p \otimes q =\\
&=4\Lambda^2 (\langle q, e \rangle p \wedge e, \langle p, e \rangle q \wedge e).(e \otimes e) - \bar p \otimes \bar q + p \otimes q=\\
&=4(\langle q, e \rangle \lambda^2(p \wedge e), \langle p, e \rangle \lambda^2(q \wedge e)).(e \otimes e) - \bar p \otimes \bar q +p \otimes q=\\
&=4\langle q, e \rangle \lambda^2(p \wedge e)(e) \otimes e + 4\langle p, e \rangle e \otimes \lambda^2(q \wedge e)(e) - \bar p \otimes \bar q - + p \otimes q =\\
&=2\langle q, e \rangle R_{\Pu(p)}(e) \otimes e + 2\langle p, e \rangle e \otimes R_{\Pu(q)}(e) - \bar p \otimes \bar q + p \otimes q =\\
&=2\langle q, e \rangle \Pu(p) \otimes e + 2\langle p, e \rangle e \otimes \Pu(q) - \bar p \otimes \bar q + p \otimes q = 0,
\end{align*}
as above. 
\end{enumerate}
\end{enumerate}
We have shown that the Jacobi identity holds for the real algebra~$\mathcal A$.
\end{proof}

We define a scalar product on the Lie algebra~$\mathcal A$ by
\begin{equation}\label{eq:kill}
\langle ((A,B),u,v,w) , ((C,D),x,y,z) \rangle = 8\tr( ux^t+vy^t+wz^t)-\tr(AC)-\tr(BD).
\end{equation}
This scalar product is easily seen to be invariant under~$\tau$ and the $\Spin(8) \times \Spin(8)$-action.

\begin{lemma}\label{lm:killing}
The scalar product~(\ref{eq:kill}) on the Lie algebra~$\mathcal A$ is $\ad$-invariant.
\end{lemma}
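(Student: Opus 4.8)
\emph{Plan.} The form~(\ref{eq:kill}) is symmetric, so $\ad$-invariance is equivalent to the associativity relation $\langle[\xi,\eta],\zeta\rangle=\langle\xi,[\eta,\zeta]\rangle$ for all $\xi,\eta,\zeta\in\mathcal A$, i.e.\ to the statement that the trilinear form $T(\xi,\eta,\zeta):=\langle[\xi,\eta],\zeta\rangle$ is totally antisymmetric. Because the bracket is skew and the form symmetric, $T$ is already alternating in its first two arguments, so I would only need to establish cyclic invariance, $T(\xi,\eta,\zeta)=T(\eta,\zeta,\xi)$; by trilinearity it suffices to treat $\xi,\eta,\zeta$ lying in the four factors $\g h:=\so(8)\oplus\so(8)$ and the three copies $V_1,V_2,V_3$ of $\Ca\otimes\Ca$.

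First I would record that $\mathcal A$ carries a $(\mathbb{Z}/2)^2$-grading: placing $\g h$ in degree $0$ and $V_1,V_2,V_3$ in the three nonzero degrees, one reads off from~(\ref{eq:e8bracket}) that $[\g h,V_i]\subseteq V_i$, that $[V_i,V_i]\subseteq\g h$, and that $[V_1,V_2]\subseteq V_3$ cyclically. The form~(\ref{eq:kill}) pairs each graded summand with itself and annihilates distinct summands, so both $\langle[\xi,\eta],\zeta\rangle$ and $\langle\xi,[\eta,\zeta]\rangle$ vanish unless $\deg\xi+\deg\eta+\deg\zeta=0$. This leaves exactly three slot patterns to examine: $\{\g h,\g h,\g h\}$, the patterns $\{\g h,V_i,V_i\}$, and $\{V_1,V_2,V_3\}$.

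For the first two patterns all three arguments lie in one of the three subalgebras $\g h\oplus V_i$. By Lemma~\ref{lm:so16} the subalgebra $\g h\oplus V_1$ is isomorphic to $\so(16)$, and a direct block computation with the isomorphism of that lemma shows that the restriction of~(\ref{eq:kill}) to $\g h\oplus V_1$ equals the trace form $(M,N)\mapsto-\tr(MN)$ of $\so(16)$; this is precisely the computation that pins down the constant $8$ in~(\ref{eq:kill}). Since $-\tr(MN)$ is $\ad$-invariant, invariance of $T$ on these patterns follows, and the $\tau$-invariance of both the bracket (Lemma~\ref{lm:tau}) and the form transports the conclusion to $V_2$ and $V_3$.

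The remaining case $\{V_1,V_2,V_3\}$ is the genuinely exceptional one and I expect it to be the main obstacle. Taking $\xi=(0,u,0,0)$, $\eta=(0,0,y,0)$, $\zeta=(0,0,0,z)$, the bracket formula gives $[\xi,\eta]=(0,0,0,\overline{u\oo y})$ and $[\eta,\zeta]=(0,\overline{y\oo z},0,0)$, with no triality twists surviving, so invariance reduces to the single identity $\tr(\overline{u\oo y}\,z^t)=\tr(u\,(\overline{y\oo z})^t)$. Using~(\ref{eq:OOMAt}) the Frobenius pairing factorizes as $\langle a\otimes b,c\otimes d\rangle=\langle a,c\rangle\langle b,d\rangle$ while $\oo$ and conjugation act factorwise; testing on simple tensors $u=a\otimes b$, $y=c\otimes d$, $z=e\otimes f$, the identity splits into two octonionic statements of the shape $\langle\overline{ac},e\rangle=\langle a,\overline{ce}\rangle$, which is the standard adjunction $\Re(\overline{ac}\,\bar e)=\Re(a\,\overline{ce})$, i.e.\ the cyclic symmetry of the real part $\Re(ace)$ of a product of octonions. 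The delicate points here, where non-associativity of $\Ca$ could in principle interfere, are to confirm that the triality twists really cancel so that everything rests on the cyclic symmetry of $\Re(ace)$, and to use that conjugation and matrix transposition commute on $\Ca\otimes\Ca$ so that $(\overline{y\oo z})^t$ is treated correctly; together with $\tau$-equivariance and the antisymmetry in the first two arguments, this yields the full alternating symmetry of $T$ and hence the $\ad$-invariance of~(\ref{eq:kill}).
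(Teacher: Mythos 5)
Your proof is correct, but it takes a genuinely different route from the paper's. The paper verifies $\langle[\mu,\xi],\eta\rangle+\langle\xi,[\mu,\eta]\rangle=0$ by two explicit trace computations --- once for $\mu=((P,Q),0,0,0)$ and once for $\mu=(0,e\otimes e,0,0)$ --- and then uses linearity together with the $\tau$- and $\Spin(8)\times\Spin(8)$-invariance of the bracket and the form to reduce every other $\mu$ to these two cases (the group acts transitively on unit decomposables $p\otimes q$, which span $\Ca\otimes\Ca$, and $\tau$ rotates the three copies). You instead use the $(\mathbb{Z}/2)^2$-grading to isolate the three nontrivial slot patterns, dispose of $\{\g{h},\g{h},\g{h}\}$ and $\{\g{h},V_i,V_i\}$ by checking that the isomorphism of Lemma~\ref{lm:so16} carries the restriction of~(\ref{eq:kill}) to the negative trace form of $\so(16)$ (this checks out: $-\tr(MN)=8\tr(XY^t)-\tr(PR)-\tr(QS)$, which indeed pins down the coefficient $8$), and reduce the exceptional pattern $\{V_1,V_2,V_3\}$ to the identity $\langle\overline{ac},e\rangle=\langle a,\overline{ce}\rangle$; the triality twists really do drop out there, since the $\Lambda$'s occur only in the $\curlywedge$-terms and in the $\g{h}$-action. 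Your version trades the paper's two long matrix computations for one structural observation plus one classical octonion identity, at the cost of the (correct, but worth writing out) bookkeeping that cyclic invariance on a single ordering of $V_1\times V_2\times V_3$ propagates to all six orderings via $\tau$-equivariance and antisymmetry in the first two slots. One small slip: the gloss ``$\Re(\overline{ac}\,\bar e)=\Re(a\,\overline{ce})$'' misplaces a conjugation bar --- since $\langle a,\overline{ce}\rangle=\Re\bigl(a\cdot\overline{(\overline{ce})}\bigr)=\Re(a(ce))$, the identity you need is $\Re((ac)e)=\Re(a(ce))$, which is the true statement that the associator has no real part; the displayed identity $\langle\overline{ac},e\rangle=\langle a,\overline{ce}\rangle$ that your argument actually rests on is correct.
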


\begin{proof}
Let $((A,B),u,v,w),((C,D),x,y,z) \in \mathcal A$ and let $P,Q \in \so(8)$.
We compute
\begin{align*}
&\langle [((A,B),u,v,w),((P,Q),0,0,0)],((C,D),x,y,z) \rangle = \\
&=\langle ([(A,B),(P,Q)],-(P,Q).u,-\Lambda(P,Q).v,-\Lambda^2(P,Q).w),((C,D),x,y,z) \rangle = \\
&=8\tr( (-Pu+uQ)x^t + (-\lambda(P)v+v\lambda(Q))y^t + (-\lambda^2(P)w+w\lambda^2(Q))z^t ) - \\
&\;\;-\tr(APC-PAC)-\tr(BQD-QBD)=\\
&=8\tr( u(Px-Qx)^t + v(\lambda(P)y-y\lambda(Q))^t + w(\lambda^2(P)z-z\lambda^2(Q))^t)  - \\
&\;\;-\tr(APC-ACP)-\tr(BQD-BDQ)=\\
&=\langle ((A,B),u,v,w),([(P,Q),(C,D)],(P,Q).x,\Lambda(P,Q).y,\Lambda^2(P,Q).z) \rangle =\\
&=\langle ((A,B),u,v,w),[((P,Q),0,0,0),((C,D),x,y,z)] \rangle.
\end{align*}

Furthermore, we have
\begin{align*}
&\langle [((A,B),u,v,w),( 0,e \otimes e,0,0)],((C,D),x,y,z) \rangle = \\
&=\langle (4(e \otimes e) \curlywedge u,(A,B).(e \otimes e),\bar w, -\bar v),((C,D),x,y,z) \rangle = \\
&=\langle ((4ee^tu^t-4uee^t, 4ee^tu-u^tee^t),Aee^t+ee^tB^t,\bar w, -\bar v),((C,D),x,y,z) \rangle = \\
&=8\tr(Aee^tx^t+ee^tB^tx^t+\bar wy^t-\bar vz^t) - \\ &\;\;-\tr(4ee^tu^tC-4uee^tC)-\tr(4ee^tuD-4u^tee^tD)=\\
&= \tr(8Aee^tx^t + 8B^tx^tee^t + 8Cuee^t + 8 Du^tee^t  -8\bar vz^t +8\bar wy^t) =\\
&=8\tr(-uee^tC^t-uDee^t -v\bar z^t +w\bar y^t) - \\
&\;\;-\tr(4Axee^t - 4Aee^tx^t) -\tr(4Bx^tee^t-4Bee^tx) =\\
&=\langle ((A,B),u,v,w),((4xee^t - 4ee^tx^t, 4x^tee^t-4ee^tx), -Cee^t-ee^tD^t, -\bar z, \bar y) \rangle =\\
&=\langle ((A,B),u,v,w),(4 x \curlywedge (e \otimes e), -(C,D).(e \otimes e), -\bar z, \bar y) \rangle =\\
&=\langle ((A,B),u,v,w),[(0,e \otimes e,0,0),((C,D),x,y,z)] \rangle.
\end{align*}

Here we have used the fact that $\tr(\bar x y^t) = \tr(x \bar y^t)$ for $x,y \in \Ca \otimes \Ca$.
Since the scalar product on~$\mathcal A$ is invariant under~$\tau$ and under the $\Spin(8) {\times} \Spin(8)$-action, the above calculations suffice to prove the $\ad$-invariance of the scalar product.
\end{proof}

\begin{proof}[Proof of Theorem~\ref{th:e8bracket}]
We have shown in Lemma~\ref{lm:e8jacobi} that the skew-symmetric operation of the real algebra~$\mathcal A$ satisfies the Jacobi identity. Therefore, $\mathcal A$ with the bracket defined in~(\ref{eq:e8bracket}) is a real Lie algebra. 
It can be seen from the bracket formula~(\ref{eq:e8bracket}) that if the adjoint representation of~$\mathcal A$ is restricted to the subalgebra $(\so(8) \oplus \so(8)) \times \{0\} \times \{0\} \times \{0\}$, then the resulting representation on~$\mathcal A$ is a direct sum of $5$~inequivalent irreducible modules. 
Thus any non-trivial ideal of~$\mathcal A$ is a direct sum of one or more of these $5$~summands. However, inspection of the formula~(\ref{eq:e8bracket}) shows that the only such ideal is~$\mathcal A$ itself and thus the Lie algebra~$\mathcal A$ is simple. We have~$\dim(\mathcal A) = 248$; using the formulae $\dim(\mathrm{A}_n) =n(n+2)$, $\dim(\mathrm{B}_n) = \dim(\mathrm{C}_n) =n(2n+1)$, $\dim(\mathrm{D}_n) =n(2n-1)$, $\dim(\mathrm{G}_2)=14$, $\dim(\mathrm{F}_4)=52$, $\dim(\mathrm{E}_6)=78$,  $\dim(\mathrm{E}_7)=133$, $\dim(\mathrm{E}_8)=248$, it is easy to see that any simple real Lie algebra of dimension~$248$ is of type~$\mathrm{E}_8$. There are three real forms of the complex simple Lie algebra of type~$\mathrm{E}_8$, the non-compact real forms~$\g{e}_{8(8)}$ and~$\g{e}_{8(-24)}$ and the compact form~$\g{e}_{8(-248)}$. Since we have shown in Lemma~\ref{lm:killing} that there exists an $\ad$-invariant scalar product on~$\mathcal A$, it follows that $\mathcal A$ is isomorphic to the compact real form~$\g{e}_{8(-248)}$.
\end{proof}

Using the duality of Riemannian symmetric spaces, we immediately obtain the bracket formula for the real noncompact simple Lie algebra~$\mathfrak{e}_{8(8)}$.

\begin{corollary}\label{cor:e88bracket}
The binary operation on~$(\so(8)\oplus\so(8)) \times (\Ca \otimes \Ca)^3$ defined by
\begin{align}
[(A,u,v,w),(B,x,y,z)] = (C^*,r^*,s,t)
\end{align}
where
\begin{align*}
C^* &=[A,B] - 4u \curlywedge x + 4\Lambda^2(v \curlywedge y) + 4\Lambda(w \curlywedge z),\\
r^* &= A.x-B.u-\overline{v \oo z}+\overline{y \oo w},\\
s &= \Lambda(A).y-\Lambda(B).v+\overline{w \oo x}-\overline{z \oo u},\\
t &= \Lambda^2(A).z-\Lambda^2(B).w+\overline{u \oo y}-\overline{x \oo v},
\end{align*}
where $A,B \in \so(8)\oplus\so(8)$, $u,v,w,x,y,z \in \Ca \otimes \Ca$,
defines the bracket operation of a real Lie algebra isomorphic to the noncompact exceptional simple algebra~$\mathfrak{e}_{8(8)}$.
\end{corollary}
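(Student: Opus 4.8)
The plan is to realise the stated bracket as the \emph{noncompact dual} of the compact form $\mathcal A \cong \g{e}_{8(-248)}$ from Theorem~\ref{th:e8bracket}, associated to a suitable symmetric decomposition. The sign pattern in the corollary --- the coefficients of $\Lambda^2(v\curlywedge y)$ and $\Lambda(w\curlywedge z)$ in the $\so(8)\oplus\so(8)$-component and of $\overline{v\oo z}$ and $\overline{y\oo w}$ in the first $\Ca\otimes\Ca$-component are reversed, while $s$, $t$ and the remaining terms of $C$, $r$ are unchanged --- is exactly what one obtains by flipping the sign of the bracket on one part of a $\mathbb{Z}/2$-grading. Concretely, I would consider the linear map $\sigma(A,u,v,w) = (A,u,-v,-w)$.

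First I would check, by direct inspection of~(\ref{eq:e8bracket}), that $\sigma$ is an involutive automorphism of $\mathcal A$: each term of the bracket is visibly homogeneous for the grading that places $\so(8)\oplus\so(8)$ and the first $\Ca\otimes\Ca$ in the even part $\g{k}$ and the second and third copies of $\Ca\otimes\Ca$ in the odd part $\g{p}$. Equivalently, this records the symmetric-pair relations $[\g{k},\g{k}]\subseteq\g{k}$, $[\g{k},\g{p}]\subseteq\g{p}$, $[\g{p},\g{p}]\subseteq\g{k}$. By Lemma~\ref{lm:so16} the fixed-point subalgebra $\g{k}=\{(A,u,0,0)\}$ is isomorphic to $\so(16)$, and the $(-1)$-eigenspace is $\g{p}=\{(0,0,v,w)\}$, with $\dim\g{k}=120$ and $\dim\g{p}=128$.

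Next I would pass to the complexification and form the real subspace $\g{g}^* := \g{k} \oplus i\,\g{p}$ of $\mathcal A \otimes \C \cong \g{e}_8^{\C}$. The symmetric-pair relations show that $\g{g}^*$ is closed under the bracket (only $[\g{p},\g{p}]$ picks up a factor $i^2=-1$), that $\g{g}^*\oplus i\,\g{g}^*=\g{e}_8^{\C}$, and hence that $\g{g}^*$ is a real form satisfying the Jacobi identity automatically, being a real subalgebra of $\g{e}_8^{\C}$; no further computation is needed at this step. The real-linear isomorphism $(X,P)\mapsto (X,iP)$ transports the bracket of $\g{g}^*$ back to the operation on $\mathcal A$ in which $[\g{p},\g{p}]$ is negated and every other bracket is unchanged. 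Computing $[(0,0,v,w),(0,0,y,z)]$ from~(\ref{eq:e8bracket}) gives exactly $-4\Lambda^2(v\curlywedge y)-4\Lambda(w\curlywedge z)$ in the $\so(8)\oplus\so(8)$-slot and $\overline{v\oo z}-\overline{y\oo w}$ in the first $\Ca\otimes\Ca$-slot (its $\g{p}$-components vanish because $[\g{p},\g{p}]\subseteq\g{k}$), so negating this one piece reproduces precisely the formula for $(C^*,r^*,s,t)$ in the corollary.

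Finally I would identify the resulting real form. Since $\g{g}^*$ is a real form of the simple complex Lie algebra $\g{e}_8^{\C}$, it is a simple real Lie algebra of type $\mathrm{E}_8$, and its maximal compact subalgebra is $\g{k}\cong\so(16)$. Its character equals $\dim\g{p}-\dim\g{k}=128-120=8$, which singles out the split form $\g{e}_{8(8)}$ among the two noncompact real forms $\g{e}_{8(8)}$ and $\g{e}_{8(-24)}$. The main point requiring care is the bookkeeping of the previous paragraph --- verifying that negating $[\g{p},\g{p}]$ flips exactly the listed terms and no others, in particular that $s$ and $t$ are untouched, which holds precisely because $[\g{p},\g{p}]\subseteq\g{k}$ --- together with the correct reading of the character; the Jacobi identity itself comes for free from the embedding in $\g{e}_8^{\C}$.
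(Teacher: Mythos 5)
Your proposal is correct and follows essentially the same route as the paper: the paper likewise observes that $(A,u,v,w)\mapsto(A,u,-v,-w)$ is an involutive automorphism of the compact form with fixed-point algebra $\so(16)$ (via Lemma~\ref{lm:so16}), passes to the dual $\g{k}\oplus i\g{p}$ inside the complexification, and identifies the result as $\g{e}_{8(8)}$. Your extra bookkeeping (checking which terms of~(\ref{eq:e8bracket}) lie in $[\g{p},\g{p}]$ and computing the character $128-120=8$) just makes explicit what the paper leaves to the reader and to the reference to Helgason.
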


\begin{proof}
Let $\mathfrak{g} = \mathfrak{e}_8$, the compact form.
It is easy to see from the bracket formula~(\ref{eq:e8bracket}) and Lemma~\ref{lm:so16} that the map $(A,u,v,w) \mapsto (A,u,-v,-w)$ is an involutive automorphism of~$\mathfrak{g}$, corresponding to the Riemannian symmetric space ${\rm E}_8 / \Spin(16)$. Let $\mathfrak{g} = \mathfrak{k} \oplus \mathfrak{p}$ be the decomposition into the $(+1)$-eigenspace and $(-1)$-eigenspace. It follows that $\mathfrak{g} = \mathfrak{k} \oplus i\mathfrak{p} \subset \mathfrak{g} \otimes \C$ is the Lie algebra of the isometry group of the dual symmetric space ${\rm E}_{8(8)} / \Spin(16)$, see~\cite{helgason}. Its bracket is given by the above formula.
\end{proof}


\section{\texorpdfstring{The Lie algebra~$\mathfrak{e}_{6}$}{The Lie algebra e6}}


Let $\g{t}^2$ be the abelian subalgebra of~$\so(8)$ spanned by the elements $R_{e_1}$, $L_{e_1}$.
Then by equation~(2.2) in~\cite{K18}, we have $T_{e_1} = 2 e_1 \wedge e_0 = R_{e_1}+L_{e_1} \in \g t^2$ and it follows that $\lambda(T_{e_1}) = 2\lambda(e_1 \wedge e_0) = L_{\bar e_0} \circ L_{\bar e_1} = -L_{e_1}$ and $\lambda^2(T_{e_1}) = 2\lambda^2(e_1 \wedge e_0) = R_{\bar e_0} \circ R_{\bar e_1} = -R_{e_1}$. Hence $\g{t}^2$ is invariant under the action of the triality automorphism~$\lambda$.
Furthermore, it follows that 
\[
\lambda(L_{e_1})=R_{e_1}, \qquad \lambda^2(L_{e_1})=-T_{e_1}, \qquad \lambda(R_{e_1})=-T_{e_1}, \qquad \lambda^2(R_{e_1})=L_{e_1}.
\]
We define $\C \subset \Ca$ as the linear subspace spanned by $e_0$ and~$e_1$.
Define the subspace
\[
\g{e}_6 := (\so(8) \times \g{t}^2) \times (\Ca \otimes \C)^3
\]
of~$\g{e}_8$.

\begin{lemma}
The linear subspace~$\g{e}_6$ of~$\g{e}_8$ is a subalgebra isomorphic to the Lie algebra of the compact exceptional simple Lie group of type~$\rm E_6$.
\end{lemma}

\begin{proof}
It is clear that $\so(8) \times \g{t}^2$ is a subalgebra of~$\so(8) \times \so(8)$.
Let $A \in \so(8)$, $H \in \g{t}^2$, $x,y,z \in \Ca$, $a,b,c \in \C$.
Let us compute the following bracket: 
\begin{align}\label{eq:AHV}
[&((A,H),0,0,0),((0,0),x \otimes a, y \otimes b, z \otimes c) = \\ \nonumber 
&=((0,0),(A,H).(x \otimes a), \Lambda(A,H).(y \otimes b), \Lambda^2(A,H).(z \otimes c)) =\\ \nonumber
&=((0,0),A(x) \otimes a+x \otimes H(a), \lambda(A)(y) \otimes b+y \otimes \lambda(H)(b), \lambda^2(A)(z) \otimes c+z \otimes \lambda^2(H)(c)).
\end{align}
Since $R_{e_1}$ and $L_{e_1}$ leave $\C$ invariant, this bracket lies in~$\g{e}_6$.
Let us compute some more brackets. We have
\begin{align}\label{eq:xaoyb}
[((0,0),x \otimes a,0,0)&,((0,0),0, y \otimes b, 0) = \\  \nonumber
&=((0,0),0, 0, (y \oo z) \otimes ab) \in \g{e}_6,
\end{align}
and
\begin{align}\label{eq:xayb}
[((0,0),x \otimes a,0,0)&,((0,0),y \otimes b, 0, 0) = \\ \nonumber
&=(-4 (x \otimes a) \curlywedge (y \otimes b) ,0, 0, 0) =\\ \nonumber
&=(-4 (\langle a,b \rangle x \wedge y, \langle x,y \rangle a \wedge b) ,0, 0, 0),
\end{align}
where we have used~(\ref{eq:pqrs}).
Let us compute $a \wedge b = (a_0 e_0 + a_1 e_1) \wedge (b_0 e_0 + b_1 e_1) = (a_0 b_1 - a_1 b_0) e_0 \wedge e_1 = - \frac 12\Im(a \bar b) T_{e_1}$, showing that this bracket lies in~$\g{e}_6$. Finally, since the map $((A,B),x,y,z) \mapsto (\Lambda(A,B),y,z,x)$ is an automorphism of~$\g{e}_8$ by Lemma~\ref{lm:spin88aut} which restricts to~$\g{e}_6$, the above calculations show that~$\g{e}_6$ is a subalgebra. We have constructed a subalgebra of~$\g{e}_8$ which contains~$\g{f}_4$ as a subalgebra. Let $V$ be the 26-dimensional orthogonal complement of~$\g{f}_4$ in~$\g{e}_6$.
It now suffices to observe that~$\g{f}_4$ acts nontrivially on~$V$ by restriction of the adjoint representation of~$\g{e}_6$ to show that this representation is irreducible, since the lowest-dimensional nontrivial real representation of~$\g{f}_4$ is its 26-dimensional irreducible representation. One can check immediately that $V$ is not an ideal of~$\g{e}_6$. This shows that $\g{e}_6$ is a 78-dimensional simple subalgebra of~$\g{e}_8$ containing~$\g{f}_4$ as a subalgebra and from this we conclude that it is isomorphic to the Lie algebra of the compact exceptional simple Lie group of type~$\rm E_6$.
\end{proof}

We have shown that $\g{f}_4$ acts by its 26-dimensional irreducible representation on its orthogonal complement~$V$ in~$\g{e}_6$.
We deduce the following explicit formula for this action.

\begin{lemma}
Let us identify the orthogonal complement $V = \g{t}^2 \times (\Ca \otimes e_1)^3$ of~$\g{f}_4$ in~$\g{e}_6$ with $\R^2 \times \Ca^3$ as follows:
$
(t,s,u,v,w) \mapsto (t L_{e_1} + s R_{e_1}, u \otimes e_1, v \otimes e_1, w \otimes e_1). 
$
Let $(A,x,y,z) \in \g{f}_4$, $(t,s,x,y,z) \in V$.
Then the Lie bracket of these elements in~$\g{e}_6$ is given by
\begin{align}\label{eq:f4action}
[(A,u,v,w),(t,s,x,y,z)] = (\tilde{t},\tilde{s},\tilde{x},\tilde{y},\tilde{z}),
\end{align}
where
\begin{align*}
\tilde{t} &= 2 \langle u,x \rangle -2 \langle w,z \rangle, \\
\tilde{s} &= 2 \langle u,x \rangle -2 \langle v,y \rangle, \\
\tilde{x} &= Ax+(t+s)u+\overline{vz}-\overline{yw},\\
\tilde{y} &= \lambda(A)y-(t-2s)v+\overline{wx}-\overline{zu},\\
\tilde{z} &= \lambda^2(A)z-(s-2t)w+\overline{uy}-\overline{xv}.
\end{align*}
\end{lemma}

\begin{proof}
Let us compute the components of the right hand side of~(\ref{eq:f4action}).
Using (\ref{eq:xayb}), we get
\begin{align*}
[((0,0),u \otimes e_0,0,0)&,((0,0),x \otimes e_1, 0, 0) = \\
&=(-4 (\langle e_0,e_1 \rangle u \wedge x, \langle u,x \rangle e_0 \wedge e_1) ,0, 0, 0) 
=(0, 2 \langle u,x \rangle T_{e_1}) ,0, 0, 0).
\end{align*}
From this, we obtain
\begin{align*}
[((0,0),0,v \otimes e_0,0)&,((0,0),0,y \otimes e_1, 0) 
=(0, 2 \langle v,y \rangle \lambda^2(T_{e_1})) ,0, 0, 0)
\end{align*}
and
\begin{align*}
[((0,0),0,0,w \otimes e_0)&,((0,0),0,0,z \otimes e_1) 
=(0, 2 \langle w,z \rangle \lambda(T_{e_1})) ,0, 0, 0).
\end{align*}
Keeping in mind that $\lambda(T_{e_1})=-L_{e_1}$, $\lambda^2(T_{e_1})=-R_{e_1}$, and $T_{e_1}=R_{e_1}+L_{e_1}$, we obtain the expression for~$\tilde{t}$ and~$\tilde{s}$ in the statement for the lemma.
It remains to determine $\tilde x$, $\tilde y$, $\tilde z$.
We will use~(\ref{eq:AHV}). Let $H = tL_{e_1} + sR_{e_1}$, where $t,s \in \R$ and $x \in \Ca$.
Then
\begin{align*}
[((&0,H),0,0,0),((0,0),u \otimes e_1, 0, 0)] = ((0,0),(0,H).(u \otimes e_1), 0, 0)  = \\ &= ((0,0),u \otimes H(e_1), 0, 0) =((0,0),(-t-s)u, 0, 0),
\end{align*}
since $H(e_1) = tL_{e_1}(e_1) + sR_{e_1}(e_1) = -t-s$.
Similarly, $\lambda(H)(e_1) = \lambda(tL_{e_1} + sR_{e_1})(e_1) 
= tR_{e_1}(e_1) - sT_{e_1}(e_1) = tR_{e_1}(e_1) - sR_{e_1}(e_1) - sL_{e_1}(e_1) = -t +2s$
and $\lambda^2(H)(e_1) = \lambda^2(tL_{e_1} + sR_{e_1})(e_1) 
= -tT_{e_1}(e_1) + sL_{e_1}(e_1) = -tL_{e_1}(e_1)-tR_{e_1}(e_1)  + sL_{e_1}(e_1) =2t-s$.
This yields the second summands in the formulas for $\tilde x$, $\tilde y$, $\tilde z$.
The others can be read off from~(\ref{eq:e8bracket}).
\end{proof}


\section{\texorpdfstring{The Lie algebra~$\mathfrak{e}_{7}$}{The Lie algebra e7}}


Define $\H$ to be the subalgebra of~$\Ca$ spanned by $e_0=e,e_1,e_2,e_3$ and define the subalgebra
\begin{equation}\label{eq:so4def}
\so(4) :=
\left\{ \left(
\begin{array}{c|c}
  A &  \\ \hline
   & 0 \\
\end{array} \right)
\colon
A \in \R^{4 \times 4}, A^t = - A
\right\} \subset \so(8).
\end{equation}
Let $U_0 = \{0\} \times \H \times \{0\} \times \{0\}$.
Then it is easy to check that $\so(4)+U_0$ is a subalgebra of~$\so(8)$ isomorphic to~$\so(5) \cong \mysp(2)$ and we have $[U_0,U_0]=\so(4)$.
Define the subspaces $U_1 = \{0\} \times \{0\} \times \H \times \{0\}$ and $U_2 = \{0\} \times \{0\} \times \{0\} \times \H$ of~$\g{f}_4$. 
We have $[U_1,U_1]=\lambda^2(\so(4))$ and $[U_2,U_2]=\lambda(\so(4))$.
Now define
\[
\mysp(1)^3 := \so(4) + \lambda(\so(4)) + \lambda^2(\so(4)).
\]
It was shown in~\cite{K18} that this subspace is a subalgebra of~$\so(8)$. 
The representations $\lambda|_{\so(4)}$ and $\lambda^2|_{\so(4)}$ leave the subspaces $\H$ and $\H e_4$ of~$\Ca$ invariant and act nontrivially on both of them. 
Let $\varrho \colon \so(4) \to \GL(\H e_4)$ be the representation defined by $\varrho(A)(x) = \lambda(A)(x)$ and let $\varphi \colon \so(4) \to \GL(\H e_4)$ be the representation defined by $\varphi(A)(x) = \lambda^2(A)(x)$.
Since we have  $L_a|_{\H e_4} = -R_a|_{\H e_4}$ for $a \in \Pu(\H)$, it follows that the images of $\varrho$ and $\varphi$ are the same. Therefore, we have
\begin{equation}\label{eq:sp13descr}
\g{sp}(1)^3 =
\left\{ \left(
\begin{array}{c|c}
  A &  \\ \hline
   & \varrho(B) \\
\end{array} \right) 
\colon
A \in \R^{4 \times 4}, A^t = - A, \; B \in \su(2)
\right\},
\end{equation}
where we denote by $\su(2)$ the simple ideal of~$\so(4)$ which is not contained in the kernel of~$\varrho$. 
It follows that $\g{sp}(1)^3$ is a subalgebra of~$\so(8)$ isomorphic to $\mysp(1) \oplus \mysp(1) \oplus \mysp(1)$. By definition, we have $\lambda(\g{sp}(1)^3) = \g{sp}(1)^3$.

We define the following linear subspace of~$\mathfrak{e}_{8}$:
\[
\mathfrak{e}_{7} := \so(8) \oplus \mysp(1)^3 \oplus (\Ca \otimes \H)^3.
\]

\begin{lemma}
The linear subspace $\mathfrak{e}_{7}$ is a subalgebra of~$\mathfrak{e}_{8}$, which is isomorphic to the Lie algebra of the exceptional compact Lie group~$\mathsf{E}_7$.
\end{lemma}

\begin{proof}
We first note that the action~(\ref{eq:PQact}) restricted to $(\so(8) \oplus \mysp(1)^3) \times (\Ca \otimes \H)$ obviously leaves $(\Ca \otimes \H)$ invariant. 
Since $\g{sp}(1)^3$ is invariant under~$\lambda$, it follows that $\so(8) \oplus \mysp(1)^3$ is invariant under~$\Lambda$.
This in turn shows that taking the bracket of elements in~$\so(8) \oplus \mysp(1)^3$ with elements in~$(\Ca \otimes \H)^3$ results in elements of~$(\Ca \otimes \H)^3$.
Since $\H \wedge \H \subseteq \so(4)$, formula~(\ref{eq:pqrs}), together with the invariance of~$\mysp(1)^3$ under~$\lambda$, shows that the bracket of two elements in~$(\Ca \otimes \H)^3$ is contained in~$\mathfrak{e}_{8}$.

Restricting the adjoint representation of~$\mathfrak{e}_{7}$ to the subalgebra~$\so(8) \oplus \mysp(1)^3$, it splits into $7$~mutually inequivalent irreducible submodules; it can be easily checked that none of them is an ideal; hence the subalgebra~$\mathfrak{e}_{7}$ is simple.
Being a subalgebra of~$\mathfrak{e}_{8}$, the Lie algebra~$\mathfrak{e}_{7}$ is compact; since it is $133$-dimensional, it follows that $\mathfrak{e}_{7}$ is isomorphic to the Lie algebra of exceptional compact Lie group~~$\mathsf{E}_7$.
\end{proof}


\end{document}